\newtheorem{theorem}{Theorem}[section]
\newtheorem{lemma}[theorem]{Lemma}
\newtheorem{cor}[theorem]{Corollary}
\newtheorem{prop}[theorem]{Proposition}
\newtheorem{exa}[theorem]{Example}
\def\cC{\mathcal C}
\def\cF{\mathcal F}
\def\cH{\mathcal H}
\def\cL{\mathcal L}
\def\cM{\mathcal M}
\def\cX{\mathcal X}
\def\ord{\mbox{\rm ord}}
\def\deg{\mbox{\rm deg}}
\def\div{\mbox{\rm div}}
\def\min{{\rm min}}
\def\Im{\mbox{\rm Im}}
\def\Div{\mbox{\rm div}}
\def\fq{{\mathbb F}_q}
\def\bfq{{\overline{\mathbb F}}_q}
\def\gg{\mathfrak{g}}
\def\F{\mathbb F}
\title{Number of rational branches of a singular plane curve over a finite field}
\author{Nazar Arakelian}
\begin{document}

\maketitle
\begin{abstract}
Let $\cF$ be a plane singular curve defined over a finite field $\F_q$. Via results of \cite{SV} and \cite{AB}, the linear system of plane curves of a given degree passing through the singularities of $\cF$ provides potentially good bounds for the number of points on a non-singular model of $\cF$. In this note, the case of a curve with two singularities such that the sum of their multiplicities is precisely the degree of the curve is investigated in more depth. In particular, such plane models are completely characterized, and for $p > 3$, a curve of this type attaining one of the obtained bounds is presented.
\end{abstract}

\emph{Keywords}: Algebraic curves, rational points, finite fields.

\section{Introduction}\label{intro}

For a prime number $p$, let $\F_q$ be the finite field with $q$ elements, where $q$ is a power of $p$. Let $\cF$ be a (projective, geometrically irreducible, algebraic) plane curve of degree $d$, genus $\gg$, defined over $\F_q$. Denote by $N_m(\cF)$ the number of $\F_{q^m}$-rational points on a non-singular model of $\cF$ (or equivalently, the number of $\F_{q^m}$-rational branches of $\cF$), where $m \geq 1$ is an integer. One of the most challenging problems regarding $\cF$ is the determination of the number $N_m(\cF)$ in terms of $q,d,\gg$ and other covariants. In fact, there are only few families of curves for which such an explicit formula for $N_m(\cF)$ is known, see \cite{Bo2,CV,GSX,HV} for instance. This fact has been motivating the search for estimates for $N_m(\cF)$ over the last years. The most remarkable of them is the Hasse-Weil bound, presented by A. Weil in the 1940s, which states   
\begin{equation}\label{hw}
|N_m(\cF)-(q^m+1)| \leq 2\gg\sqrt{q^m}.
\end{equation}

Introduced in 1986, the St\"ohr-Voloch technique is also one of the most significant approaches to estimate $N_m(\cF)$ \cite{SV}. This technique depends on the linear series of the curve. More precisely, let $g_n^r$ be a base-point-free linear series on $\cF$ of degree $n$ and dimension $r$, defined over $\F_{q^m}$. The St\"ohr-Voloch main theorem \cite[Theorem 2.13]{SV} states that, associated to $g_n^r$ and $q^m$, there exists a sequence of non-negative integers $(\nu_0,\ldots,\nu_{r-1})$, with $0=\nu_0<\cdots<\nu_{r-1}$, such that
\begin{equation}\label{sv}
N_m(\cF) \leq \displaystyle\frac{(\nu_1+\cdots+\nu_{r-1})(2\gg-2)+(q^m+r)n}{r}.
\end{equation}

The bound (\ref{sv}) improves the Hasse-Weil bound in many instances (see \cite{SV}). In \cite{AB} a bound obtained via a variation of St\"ohr-Voloch method is presented: if $g_n^r$ is defined over $\fq$, given positive integers $u,m$ with $u<m$ and $\gcd(u,m)=1$, there are positive integers $c_i$  (depending on $i$ and $g_n^r$) with $i=1,u,m,m-u$,  such that
\begin{eqnarray}\label{ab}
& & (c_1-c_u-c_m-c_{m-u})N_1(\cF)+c_uN_u(\cF)+c_mN_m(\cF)+c_{m-u}N_{m-u}(\cF)  \\ \nonumber
  &\leq &(\kappa_0+\cdots+\kappa_{r-2})(2\gg-2)+(q^u+q^m+r-1)n,
\end{eqnarray}
where $(\kappa_0,\ldots,\kappa_{r-2})$ is a sequence of non-negative integers, which depends also on $g_n^r$, such that  $0 \leq \kappa_0<\cdots<\kappa_{r-2}$, see \cite[Theorem 4.4]{AB}. Here, we have $c_1 \geq q^u+2(r-1)$ and $c_{m-u} \geq q^u$, and these numbers can be bigger depending on some properties of $\cF$ and $g_n^r$. As shown in \cite{AB}, the bound (\ref{ab}) improves Hasse-Weil and St\"ohr-Voloch bounds in many situations.

It can be seen directly from (\ref{sv}) that the application of the St\"ohr-Voloch method to a linear series $g_n^r$ such that $r$ is large compared to $n$, has good chances to yield efficient upper bounds for the number of rational points on a non-singular model of the curve. The same holds for bound (\ref{ab}). In this sense, given a plane singular curve $\cF$, we show that the bounds for the number of its rational points arising from linear systems of plane curves of the same degree passing through the singularities of $\cF$ are potentially good (Section \ref{ls}). The case in that $\cF$ satisfies the following property is investigated more in depth (Section \ref{twosing}):
\begin{itemize}
\item [(H)] $\cF$ has at least two singularities $P_1$ and $P_2$ defined over $\F_q$ with multiplicities $r_1$ and $r_2$, respectively, such that $r_1+r_2=d$.
\end{itemize}
 In particular, in Theorem \ref{cotaSVH}, we present the bounds (\ref{sv}) and (\ref{ab}) obtained via the linear system of conics passing through the sigularities of a curve $\cF$ satisfying (H). For suitable values of $d$, these bounds are better than bounds arising from complete linear systems of lines and conics.

In Section \ref{att}, the plane models satisfying such property are completely characterized via their equations and the respective generators of the function fields (Theorem \ref{carac}). Moreover, in Proposition \ref{toda}, it is shown that every projective algebraic curve with at least two rational branches has a plane model for which (H) holds. We finish Section \ref{att} by presenting, for every $q$ with $p>3$, a plane curve $\cM$ satisfying (H), defined over $\F_q$, that attains the bound (\ref{ab}) obtained in this way.

\section{Background and notation}\label{bg}

Notation and terminology are standard. Our main references are \cite{AB,HKT,SV}. As before, $\mathbb{F}_q$ denotes a finite field of order $q$, where $q$ is a power of a prime number $p$. Let $\bfq$ denote the algebraic closure of $\mathbb{F}_q$. Furthermore, $\cF$ stands for an algebraic projective geometrically irreducible plane curve of degree $d$, genus $\gg$ and with affine equation 
\begin{equation}
\label{eqfxy} f(X,Y)=0,
\end{equation}
defined over $\mathbb{F}_q$. Denote by $\cX$ a non-singular model of $\cF$. Then the points on $\cX$ are in correspondence with the branches of $\cF$, see \cite[Theorem 5.29]{HKT}. The function field of $\cF$ is $\mathbb{F}_q(\cF)=\mathbb{F}_q(\cX)=\mathbb{F}_q(x,y)$ with $f(x,y)=0$.

Let $\cH$ be a plane curve such that $\cF$ is not a component of $\cH$. For a branch $\gamma$ of $\cF$ centered at $P^{\prime} \in \cF$, the intersection multiplicity between $\gamma$ and $\cH$ is denoted by $I(P^{\prime},\cH \cap \gamma)$. If $P \in \cX$ is the point associated to $\gamma$, and $\cH$ is defined by the equation $h(X,Y)=0$, where $h(X,Y) \in \F_q[X,Y]$, then $I(P^{\prime},\cH \cap \gamma)=v_P(h(x,y))$, where $v_P$ denotes the discrete valuation with respect to $P$ (\cite[Definition 4.34]{HKT}). Moreover, according to \cite[Theorem 4.36]{HKT}, the intersection multiplicity between $\cF$ and $\cH$ at some $Q \in \mathbb{P}^{2}(\bfq)$, denoted by $I(Q,\cF \cap \cH)$, satisfies
\begin{equation}\label{muint}
I(Q,\cF \cap \cH)= \displaystyle\sum_{\gamma}I(Q,\cH \cap \gamma),
\end{equation}
with the sum running over all branches $\gamma$ of $\cF$ centered at $Q$. Let $Q \in \mathbb{P}^2(\bfq)$ and denote by $m_Q(\cF)$ the multiplicity of $Q$ in $\cF$. A line $\ell$ is said to be tangent to $\cF$ at $Q$ if $I(Q,\cF \cap \ell)>m_Q(\cF)$.

The intersection divisor of $\cH$ is defined by
$$
\cH \cdot \cF= \displaystyle\sum_{P \in \cX} I(P^{\prime},\cH \cap \gamma_P)P,
$$
where $\gamma_P$ is the branch of $\cF$ corresponding to $P \in \cX$ centered at $P^{\prime} \in \cF$.

Let $\Sigma$ be a linear system of plane curves of degree $t$. Then $\Sigma$ cuts out on $\cF$ a (not necessarily complete) linear series $g_{\tilde{n}}^r$, where $\tilde{n}$ and $r$ denote the degree and the dimension of $g_{\tilde{n}}^r$ respectively, \cite[Theorem 6.46]{HKT}. The divisors in $g_{\tilde{n}}^r$ are precisely the intersection divisors $\cH \cdot \cF$, with $\cH \in \Sigma$. Thus $\tilde{n}=td$ by B\'ezout's Theorem. Let $B$ be an effective divisor such that $D \geq B$ for all $D \in g_{\tilde{n}}^r$. The divisor $B$ is called the base locus of $g_{\tilde{n}}^r$. A point $P$ in the support of $B$ is called a base point of $g_{\tilde{n}}^r$. If $B=0$, then the linear series $g_{\tilde{n}}^r$ is said to be base-point-free. Let $E$ be an effective divisor on $\cF$ such that $g_{\tilde{n}}^r \subseteq |E|$ (here, |E| denotes the complete linear series determined by $E$). Note that, for $f \in \bfq(\cF)$, if $\div(f)+E \geq B$ then $\div(f)+(E-B) \geq 0$. Moreover, a linearly independent set in $\cL(E)$ remains linearly independent in $\cL(E-B)$, where $\cL(R)$ denotes the Riemann-Roch space of a divisor $R$. Thus $\Sigma$ cuts out on $\cF$ a base-point-free linear series $g_{{\tilde{n}}-\deg B}^r \subseteq |E-B|$. Roughly speaking, we exclude the base locus of $g_{\tilde{n}}^r$, and then we obtain a base-point-free linear series $g_{n}^r$, where $n={\tilde{n}}-\deg B$.

The base-point-free linear series $g_{n}^r$ gives rise to a model of $\cF$ of degree $n$ in $\mathbb{P}^r(\bfq)$, see e.g. \cite[Section 7.4]{HKT}. From \cite[Section 1]{SV}, given a branch $\gamma$ of $\cF$, there exists a sequence of non-negative integers $(j_0(\gamma), \ldots, j_r(\gamma))$, such that $j_0(\gamma)< \ldots<j_r(\gamma)$, called the order sequence of $\gamma$ with respect to $g_n^r$, which is defined by all the possible intersection multiplicities of $\gamma$ with some curve of $\Sigma$. For a non-singular point $P \in \cF$, this sequence is also called order sequence of $P$, and its terms denoted by $j_i(P)$, for $i=0,\ldots,r$. From \cite[Section 1]{SV}, except for a finite number of branches of $\cF$, this sequence is the same. Such sequence is called the order sequence of $\cF$ with respect to $\Sigma$ (or $g_n^r$), and it is denoted by $(\varepsilon_0, \ldots,\varepsilon_r)$.   When $(\varepsilon_0,\ldots,\varepsilon_r)=(0,\ldots,r)$, the curve $\cF$ is said to be classical with respect to $\Sigma$; otherwise, $\cF$ is called non-classical.

For a given branch $\gamma$ of $\cF$ centered at $Q$, by \cite[Theorem 1.1]{SV}, there exists a unique curve $\cC_\gamma \in \Sigma$ such that
$$
I(Q,\cC_\gamma \cap \gamma)=j_r(\gamma).
$$
The curve $\cC_\gamma$ is the osculating curve (with respect to $\Sigma$) to $\cF$ at $\gamma$. When $\Sigma$ is the linear system of lines, then $\cC_\gamma$ is called the tangent line at $\gamma$. In this case, one can show that $\cC_\gamma$ is tangent to $\cF$ at $Q$ in the usual sense. If $P \in \cX$ is associated to $\gamma$, we also say that $\cC_\gamma$ is the osculating curve at $P$.

Suppose that $g_n^r$ is defined over $\F_q$. Let $\{f_0,\ldots,f_r\}$ be a basis of the Riemann-Roch space associated to $g_n^r$, with $f_i \in \fq(\cF)$, and let $\tau$ be a separating variable of the extension $\fq(\cF)/\fq$. By \cite[Proposition 2.1]{SV}, there exists a sequence of non-negative integers $(\nu_0,\ldots,\nu_{r-1})$, chosen minimally in the lexicographic order, such that
\begin{equation}\label{fncl}
\left|
  \begin{array}{ccc}
  f_0^q & \ldots & f_r^q \\
  D_\tau^{(\nu_0)}f_0 & \ldots & D_\tau^{(\nu_0)}f_r \\
   \vdots & \cdots & \vdots \\
  D_\tau^{(\nu_{r-1})}f_0 & \cdots & D_\tau^{(\nu_{r-1})}f_r
  \end{array}
  \right| \neq 0,
  \end{equation}
where $D^{(k)}_\tau g$ is the $k$-th Hasse derivative of the function $g$ with respect to $\tau$. By \cite[Proposition 1.4]{SV}, this sequence does not depend on the choice of the basis $\{f_0,\ldots,f_r\}$. It is called the $\fq$-Frobenius order sequence of $\cF$ with respect to $\Sigma$ (or $g_n^r$). Again, the curve $\cF$ is $\fq$-Frobenius classical with respect to $\Sigma$ if $(\nu_0,\ldots,\nu_{r-1})=(0,\ldots, r-1)$, and $\fq$-Frobenius non-classical otherwise. Note that, once the sequence $(\nu_0,\ldots,\nu_{r-1})$ is known, bound (\ref{sv}) is determined.

Now let $u,m$ be  positive integers with $u<m$ and $\gcd(u,m)=1$. In \cite[Proposition 3.1]{AB}, it is shown that there exists a sequence of non-negative integers $(\kappa_0,\ldots,\kappa_{r-2})$, chosen minimally in the lexicographic order, such that
\begin{equation}\label{dfncl}
\left|
  \begin{array}{ccc}
  f_0^{q^m} & \ldots & f_r^{q^m} \\
	f_0^{q^u} & \ldots & f_r^{q^u} \\
  D_\tau^{(\kappa_0)}f_0 & \ldots & D_\tau^{(\kappa_0)}f_r \\
   \vdots & \cdots & \vdots \\
  D_\tau^{(\kappa_{r-2})}f_0 & \cdots & D_\tau^{(\kappa_{r-2})}f_r
  \end{array}
  \right| \neq 0.
  \end{equation}
Such sequence, which is not dependent on the basis $\{f_0,\ldots,f_r\}$ (by \cite[Proposition 3.2]{AB}), is called $(q^u,q^m)$-Frobenius order sequence of $\cF$ with respect to $\Sigma$. In this case, we also have the concept of $(q^u,q^m)$-Frobenius (non)classicality in the sense of the previous cases. In its turn, the sequence $(\kappa_0,\ldots,\kappa_{r-2})$ is the key ingredient for the determination of bound (\ref{ab}).

\section{Linear systems through singular points}\label{ls}

Let $\cF$ be a plane curve of degree $d$ defined over $\fq$ with the same notation as in Section \ref{bg}. Consider $P_1,\ldots,P_k \in \cF$ with multiplicities $r_1,\ldots,r_k$, respectively. From the discussion after \cite[Definition 5.58]{HKT} 
\begin{equation}\label{eq4} 
\gg\leq \frac{1}{2} (d-1)(d-2) - \frac{1}{2} \displaystyle\sum_{i=1}^k  r_i(r_i-1),
\end{equation}
and equality holds if $\{P_1, \ldots, P_k\}$ contains all singularities of $\cF$ and each of them is ordinary. We fix an integer $t$ with $t<d$, and positive integers $s_1,\ldots,s_k$. Let $\Sigma_t$ be the linear system of all projective (possibly reducible) plane curves $\cC$ of degree $t$ passing through each $P_i$ with multiplicity at least $s_i$. From the proof of \cite[Lemma 3.24]{HKT}, this requirement imposes at most $\textstyle\frac{1}{2} \sum_{i=1}^{k} s_i(s_i+1)$ linear conditions on $\cC$. Therefore, the integers $t,s_1,\ldots,s_k$ must be chosen in such a way that the number
\begin{equation}
\label{eq3} h=\frac{1}{2}\,t(t+3)-\displaystyle\frac{1}{2} \sum_{i=1}^k s_i(s_i+1) 
\end{equation}
is non-negative.

As explained in Section \ref{bg}, $\Sigma_t$ cuts out on $\cF$ a (not necessarily complete) linear series $g_{td}^r$ of degree $td$ and dimension $r$. A basis of the Riemann-Roch space associated to $g_{td}^r$ can be obtained via a basis of $\Sigma_t$ modulo $F(X,Y,Z)$, where $F(X,Y,Z)$ denotes the homogenization of $f(X,Y)$ with respect to $Z$. Since $t<d$, linear independence means linear independence modulo $F(X,Y,Z)$. Therefore, $r$ coincides with the dimension of $\Sigma_t$; in particular, $r\geq h$ (see e.g. \cite[Section 5.2]{Fu}). Thus, if $B$ denotes the base locus of $g_{td}^r$, we conclude that $\Sigma_t$ cuts out on $\cF$ a base-point-free linear series $g_n^r$, where $n=td-\deg B$.
 
\begin{prop}\label{eq2} $n \leq td-\sum\limits_{i=1}^k s_i r_i$, and the strict inequality holds if and only if some tangent line to $\cF$ at some $P_i$ is also a tangent line to every curve of $\Sigma_t$ at $P_i$. 
\end{prop}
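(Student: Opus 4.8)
The plan is to bound the base locus $B$ of the linear series $g_{td}^r$ branch by branch. Recall that for $P\in\cX$ corresponding to the branch $\gamma_P$ of $\cF$ centered at $P'$, one has $v_P(B)=\min_{\cC\in\Sigma_t}I(P',\cC\cap\gamma_P)$, since the divisors of $g_{td}^r$ are exactly the $\cC\cdot\cF$ with $\cC\in\Sigma_t$. The one ingredient I need is the elementary estimate: for a plane curve $\cC$ and a branch $\gamma$ centered at a point $Q$, with tangent line $\ell_\gamma$ and multiplicity $e_\gamma$, one has $I(Q,\cC\cap\gamma)\ge m_Q(\cC)\,e_\gamma$, with equality if and only if $\ell_\gamma$ is not a component of the tangent cone of $\cC$ at $Q$ (equivalently, $I(Q,\cC\cap\ell_\gamma)=m_Q(\cC)$). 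This is immediate on substituting a local parametrization of $\gamma$ into a local equation of $\cC$ written as a sum of forms of increasing degree. Since every $\cC\in\Sigma_t$ has $m_{P_i}(\cC)\ge s_i$, it follows that $I(P_i,\cC\cap\gamma)\ge s_i e_\gamma$ for each branch $\gamma$ of $\cF$ at $P_i$ and each $\cC\in\Sigma_t$, hence $v_{P_\gamma}(B)\ge s_i e_\gamma$, where $P_\gamma\in\cX$ is the point corresponding to $\gamma$. Summing over the finitely many branches of $\cF$ at $P_1,\dots,P_k$, using $\sum_{\gamma\text{ at }P_i}e_\gamma=m_{P_i}(\cF)=r_i$ and $v_P(B)\ge 0$ at all other points, yields $\deg B\ge\sum_{i=1}^k s_i r_i$, i.e.\ $n=td-\deg B\le td-\sum_{i=1}^k s_i r_i$.

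For the equality statement, this chain is an equality precisely when (i) $v_P(B)=0$ for every $P$ not lying over some $P_i$, and (ii) $v_{P_\gamma}(B)=s_i e_\gamma$ for every branch $\gamma$ of $\cF$ at each $P_i$. Condition (i) holds because $\Sigma_t$ is the full linear system of degree-$t$ curves with the prescribed multiplicities at $P_1,\dots,P_k$, so no point of $\cF$ distinct from the $P_i$ lies on every member of $\Sigma_t$. For (ii): if $v_{P_\gamma}(B)=s_i e_\gamma$ then some $\cC\in\Sigma_t$ satisfies $I(P_i,\cC\cap\gamma)=s_i e_\gamma$, which by the estimate forces $m_{P_i}(\cC)=s_i$ and $\ell_\gamma$ not tangent to $\cC$ at $P_i$. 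Conversely, any $\cC\in\Sigma_t$ with $m_{P_i}(\cC)>s_i$ already meets $\gamma$ with multiplicity $\ge(s_i+1)e_\gamma>s_i e_\gamma$, so (ii) fails for $\gamma$ exactly when $I(P_i,\cC\cap\ell_\gamma)>s_i$ for every $\cC\in\Sigma_t$ --- that is, when $\ell_\gamma$, which is one of the tangent lines of $\cF$ at $P_i$, is tangent to every curve of $\Sigma_t$ at $P_i$. Since (i) always holds, $n<td-\sum_{i=1}^k s_i r_i$ occurs iff (ii) fails for some branch, which is the stated condition.

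The inequality is routine once the multiplicity estimate is available. The part requiring care is the converse in the equality analysis: I must argue that members of $\Sigma_t$ whose multiplicity at $P_i$ strictly exceeds $s_i$ can never obstruct the value $v_{P_\gamma}(B)=s_i e_\gamma$ (they always contribute intersection at least $(s_i+1)e_\gamma$), so that the failure of (ii) collapses to a genuine common-tangency statement for all of $\Sigma_t$ at $P_i$; and, secondarily, I must confirm (i), i.e.\ that $B$ is supported only over $P_1,\dots,P_k$, which is what anchors the characterization to the singular points alone.
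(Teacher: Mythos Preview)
Your argument is correct and follows essentially the same approach as the paper: you bound the base locus branch by branch via the local estimate $I(P_i,\cC\cap\gamma)\ge s_i\cdot\ord(\gamma)$, sum over branches using $\sum_\gamma\ord(\gamma)=r_i$, and characterize equality through the tangency condition. Your treatment is in fact more detailed than the paper's---you explicitly isolate and justify condition (i) (that $B$ is supported only over the $P_i$) and carefully separate the contribution of curves with $m_{P_i}(\cC)>s_i$ in the equality analysis, whereas the paper simply asserts these points.
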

\begin{proof}
The points in the support of $B$ are the points $P_{ij}$ on the non-singular model $\cX$ of $\cF$ corresponding to the branches $\gamma_{ij}$ of $\cF$ centered at $P_i$, $i=1\ldots,k$. Furthermore, if $\ord (\gamma)$ denotes the order of the branch $\gamma$, then
$$
v_{P_{ij}}(B)=\min\{I(P_i, \cC \cap \gamma_{ij}) \ | \ \cC \in \Sigma_t\} \geq s_i \cdot \ord(\gamma_{ij}),
$$
and the strict inequality holds if and only if the tangent line to $\gamma_{ij}$ is tangent to every curve of $\Sigma_t$ at $P_i$. Since $\sum\limits_{j} \ord(\gamma_{ij})=m_{P_i}(\cF)=r_i$, the result follows.
\end{proof}

From now on we assume that $g_n^r$ is simple, and that $P_1,\ldots,P_k$ are in general position (with respect to $t$ and $s_1,\ldots,s_k$). In particular,
\begin{equation}\label{dimensao}
r = h=\frac{1}{2}\,t(t+3)-\displaystyle\frac{1}{2} \sum_{i=1}^k s_i(s_i+1).
\end{equation}

Suppose that each $P_i$ is defined over $\fq$, for $i=1,\ldots,k$. Then $g_n^r$ is defined over $\fq$. By \cite[Theorem 2.13]{SV} and \cite[Theorem 4.4]{AB}, both bounds (\ref{sv}) and (\ref{ab}) associated $g_n^r$ can be written in terms of $d,t,r_i,s_i$. Denote by $\Gamma_t$ the linear system of all plane curves of degree $t$. Then $\Gamma_t$ cuts on $\cF$ a linear series $g_{n^\prime}^{r^\prime}$ of degree $n^{\prime}=td$ and dimension $r^{\prime}=\frac{1}{2}t(t+3)$ \cite[Section 7.7]{HKT}. Note that
$$
n^{\prime}-n=\sum_{i=1}^{k}t_i \geq \sum_{i=1}^{k}r_is_i \ \ \text{ and } \ \ r^{\prime}-r=\frac{1}{2} \sum_{i=1}^k s_i(s_i+1).
$$
Hence, if the $P_i$s are singular, and we impose $s_i < 2r_i-1$ for all $i=1,\ldots,k$, when we switch from $g_{n^\prime}^{r^\prime}$ to $g_n^r$, the degree decreases more than the dimension. This indicates that bounds (\ref{sv}) and (\ref{ab}) are potentially better when applied to $g_n^r$ than to $g_{n^\prime}^{r^\prime}$. The following example illustrates this fact concerning the St\"ohr-Voloch bound (\ref{sv}).

\begin{exa}
Consider the generalized Hurwitz curve $\cF$ of degree $d=n+l$ defined over $\fq$ by the affine equation $X^nY^l+X^l+Y^n=0$, where $1 \leq l \leq n$. The singularities of $\cF$ are $P_1=(0:0:1), P_2=(0:1:0)$ and $P_3=(1:0:0)$, each of them with multiplicity $l$. By Proposition (\ref{eq2}) and (\ref{dimensao}), the linear system of conics passing through $P_1$, $P_2$ and $P_3$ cuts out on $\cF$ a linear series $g_{2n-l}^2$. Thus, this gives rise to a plane model of $\cF$ of degree $2n-l$. Note that $2n-l<d$ provided that $2l>n$.  For instance, let us consider the case $n=4$ and $l=3$ over the field $\F_{17}$. In this case, the genus of $\cF$ is $6$. Let $g_{n+l}^{2}$ and $g_{2(n+l)}^{5}$ be the linear series obtained via the cut out on $\cF$ by the complete linear system of lines and conics respectively. Then in can be checked $\cF$ is $\F_{17}$-Frobenius classical with respect to these three linear series, and we have the following bounds:
\begin{equation}\label{tabela1}
\begin{tabular}{|c|l|c|}
\hline
Bound & $N_1(\cF) \leq$ \\ \hline 
St\"orh-Voloch bound via  $g_{2n-l}^{2}$& $52$ \\[1.5mm]
St\"orh-Voloch bound via  $g_{n+l}^{2}$&  $71$ \\[1.5mm]
St\"orh-Voloch bound via  $g_{2(n+l)}^{5}$ & $81$ \\[1.5mm]
Hasse-Weil (\ref{hw}) & $66$  \\ [1.5mm] \hline 
\end{tabular}
\end{equation}

\end{exa}

\section{Bounds for the number of branches of curves satisfying (H)}\label{twosing}

 Let us recall that $\cF$ is a plane curve of degree $d$ and genus $\gg$ defined over $\F_q$. In this section, we investigate the cases for which the hypothesis (H) presented in Section \ref{intro} holds, that is, 
\begin{itemize}
\item [(H)] $\cF$ has at least two singularities $P_1$ and $P_2$ defined over $\F_q$ with multiplicities $r_1$ and $r_2$, respectively, such that $r_1+r_2=d$.
\end{itemize}
Note that, in particular, $d \geq 4$. The main result of this section is the following.

\begin{theorem}\label{cotaSVH}
Let $\cF$ be a plane curve of degree $d$ defined over $\fq$ satisfying (H). Assume that $\cF$ is $\F_{q^m}$-Frobenius classical with respect to the linear system of conics passing through $P_1$ and $P_2$. Then
\begin{equation}\label{svc}
N_m(\cF) \leq 2r_1r_2+\frac{(r_1+r_2)(q^m-3)}{3}.
\end{equation}  
Furthermore, if $\cF$ is $(q,q^m)$-Frobenius classical, then there are integers $c_m \geq 2$, $c_{m-1} \geq q$ and $c_1 \geq q+4$ such that
\begin{equation}\label{abc}
N_m(\cF) \leq \frac{2r_1r_2+(r_1+r_2)q(q^{m-1}+1)-(c_1-c_m-c_{m-1})N_1(\cF)-c_{m-1}N_{m-1}(\cF)}{c_m}.
\end{equation}
\end{theorem}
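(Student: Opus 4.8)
The plan is to obtain both estimates by specializing the general construction of Section \ref{ls} to the linear system $\Sigma_2$ of conics through $P_1$ and $P_2$ --- that is, by taking $k=2$, $t=2$ and $s_1=s_2=1$ there --- and then simplifying the resulting formulas with the defining relation $d=r_1+r_2$ of hypothesis (H). Observe at the outset that $s_i=1<2r_i-1$, since $P_i$ is singular, so this is a choice of the kind discussed in Section \ref{ls}, and that $\Sigma_2$, hence the base-point-free linear series $g_n^r$ it cuts out on $\cF$, is defined over $\F_q$ because $P_1$ and $P_2$ are $\F_q$-rational.

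I would then pin down the two invariants of $g_n^r$ and a bound for the genus term. For the dimension: the two incidence conditions on the $\mathbb{P}^5$ of conics are independent since $P_1\neq P_2$, and, as $t=2<d$, passing to residues modulo $F$ loses no dimension, so $r=h=\frac{1}{2}t(t+3)-\frac{1}{2}\sum_i s_i(s_i+1)=3$ (see (\ref{dimensao})); in particular $P_1,P_2$ are automatically in general position with respect to $t,s_1,s_2$. For the degree: Proposition \ref{eq2} gives $n\le td-\sum_i s_ir_i=2d-(r_1+r_2)=d$. For the genus: since $\{P_1,P_2\}$ is a set of points of $\cF$ of multiplicities $r_1$ and $r_2$, inequality (\ref{eq4}) gives $\gg\le\frac{1}{2}(d-1)(d-2)-\frac{1}{2}\big(r_1(r_1-1)+r_2(r_2-1)\big)$, and substituting $d=r_1+r_2$ and expanding turns this into $2\gg-2\le 2r_1r_2-2d$.

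From here both bounds are routine. For (\ref{svc}): $\F_{q^m}$-Frobenius classicality with respect to $\Sigma_2$ means the $\F_{q^m}$-Frobenius order sequence is $(\nu_0,\nu_1,\nu_2)=(0,1,2)$, so $\nu_1+\nu_2=3$, and (\ref{sv}) reads $N_m(\cF)\le\big(3(2\gg-2)+(q^m+3)n\big)/3$; since $2\gg-2$ and $n$ appear with positive coefficients, substituting $2\gg-2\le 2r_1r_2-2d$ and $n\le d$ and simplifying (again with $d=r_1+r_2$) yields exactly $2r_1r_2+(r_1+r_2)(q^m-3)/3$. For (\ref{abc}) I would apply (\ref{ab}) with $u=1$, which is admissible since $\gcd(1,m)=1$, so that $N_u=N_1$, $N_{m-u}=N_{m-1}$ and $c_u=c_1$; here $r-2=1$, so $(q,q^m)$-Frobenius classicality gives $(\kappa_0,\kappa_1)=(0,1)$ and $\kappa_0+\kappa_1=1$. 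Then the right-hand side of (\ref{ab}) becomes $(2\gg-2)+(q+q^m+2)n\le (2r_1r_2-2d)+(q+q^m+2)d=2r_1r_2+(r_1+r_2)q(q^{m-1}+1)$, while its left-hand side collapses to $(c_1-c_m-c_{m-1})N_1(\cF)+c_mN_m(\cF)+c_{m-1}N_{m-1}(\cF)$; solving for $N_m(\cF)$, using $c_m>0$, gives (\ref{abc}). The asserted lower bounds $c_1\ge q+4$ and $c_{m-1}\ge q$ are the case $u=1$, $r=3$ of the inequalities $c_1\ge q^u+2(r-1)$ and $c_{m-u}\ge q^u$ noted after (\ref{ab}), and $c_m\ge 2$ is part of \cite[Theorem 4.4]{AB}.

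There is no deep obstacle --- the argument is essentially bookkeeping --- but three points need care. First, one must show that $r$ equals $3$ exactly and not merely $r\ge h$; this is precisely where distinctness of $P_1$ and $P_2$ (independence of the incidence conditions on conics) enters. Second, one is feeding the \emph{inequalities} $n\le d$ and $2\gg-2\le 2r_1r_2-2d$ into (\ref{sv}) and (\ref{ab}) rather than exact values, which is legitimate only because both quantities occur there with non-negative coefficients. Third, the substitution $d=r_1+r_2$ must be pushed through correctly; the key identity is $(d-1)(d-2)-r_1(r_1-1)-r_2(r_2-1)-2=2r_1r_2-2d$. The precise normalization of $c_1,c_m,c_{m-1}$ is then read off directly from \cite[Theorem 4.4]{AB}.
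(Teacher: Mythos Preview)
Your proposal is correct and follows essentially the same approach as the paper's own proof: specialize the construction of Section~\ref{ls} with $t=2$, $s_1=s_2=1$ to obtain $r=3$ and $n\le d$, bound the genus via (\ref{eq4}) as $2\gg-2\le 2r_1r_2-2d$, and substitute into (\ref{sv}) and (\ref{ab}) (with $u=1$). Your write-up is in fact more detailed than the paper's --- in particular your justification that $r=3$ exactly (via independence of the two point conditions on conics, together with $t<d$) fills in what the paper leaves to the reader by citing (\ref{dimensao}); the paper itself cites \cite[Proposition~4.1]{AB} for the constants $c_i$, which is consistent with what you invoke.
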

\begin{proof}
Setting  $s_1=s_2=1$ in Proposition (\ref{eq2}) and (\ref{dimensao}), we conclude that the linear series $g_n^r$ cut out on $\cF$ by the linear system of conics passing through $P_1$ and $P_2$ is such that $n \leq d$ and $r=3$ \footnote{In general, $n=d$. For instance, this happens if $q>\frac{d-2}{2}$.}. From (\ref{eq4}) we obtain $\gg \leq r_1r_2-r_1-r_2+1$. Hence (\ref{svc}) follows from (\ref{sv}), and (\ref{abc}) follows from (\ref{ab}) and \cite[Proposition 4.1]{AB}. 
\end{proof}

\begin{cor}\label{corcota}
Let $\cF$ be a plane curve of degree $d$ defined over $\fq$ satisfying (H). Then both bounds (\ref{svc}) and (\ref{abc}) hold if one of the following is satisfied:
\begin{itemize}
\item [(a)] $\cF$ is $\F_{q^m}$-Frobenius classical with respect to the linear system of conics passing through $P_1$ and $P_2$ and $p>3$;
\item [(b)] $p>d$.
\end{itemize}
\end{cor}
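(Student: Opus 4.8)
The plan is to show that each of conditions (a) and (b) is sufficient to guarantee the two hypotheses needed in Theorem \ref{cotaSVH}, namely that $\cF$ is $\F_{q^m}$-Frobenius classical with respect to the linear system $\Sigma$ of conics through $P_1$ and $P_2$, and (for the second bound) that $\cF$ is $(q,q^m)$-Frobenius classical with respect to $\Sigma$. By Theorem \ref{cotaSVH} the bounds (\ref{svc}) and (\ref{abc}) then follow immediately. Under condition (a) the first hypothesis is assumed outright, so the only thing to verify in case (a) is that $p>3$ forces $(q,q^m)$-Frobenius classicality; under condition (b) one must establish both kinds of Frobenius classicality from $p>d$ alone.

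The key observation is that $g_n^r$ here has dimension $r=3$, so the relevant order sequence $(\varepsilon_0,\varepsilon_1,\varepsilon_2,\varepsilon_3)$ has only four terms, and the candidate non-classical sequences are extremely restricted. First I would recall the standard fact (going back to \cite{SV}) that each $\varepsilon_i$ is either $j_i$ for a general branch or is constrained by the $p$-adic criterion: if $\varepsilon_i$ is one of the orders then so is any $\varepsilon_k$ with $\binom{\varepsilon_i}{\varepsilon_k}\not\equiv 0 \pmod p$. Since $\varepsilon_0=0$ and $\varepsilon_1=1$ always (the linear system $\Sigma$ separates points and tangent directions at a general point, as $n<d$ and the conics are not all singular there), the only possible deviation from $(0,1,2,3)$ occurs at $\varepsilon_2$ or $\varepsilon_3$; and by the $p$-adic criterion applied to $\varepsilon_1=1$, a nonclassical $\varepsilon_2$ or $\varepsilon_3$ would have to be divisible by $p$. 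The largest order $\varepsilon_3$ is bounded by the intersection number of a general branch with its osculating conic, and a Bézout-type estimate bounds this by something controlled by $d$; more precisely, since the osculating conic meets $\cF$ in a divisor of degree $n\le d$ and the branch in question already absorbs $\varepsilon_3$ of that, one gets $\varepsilon_3\le n\le d$, but in fact the relevant bound for the conic linear system forces $\varepsilon_3 < $ something small. The upshot is: if $p>3$ then $\varepsilon_2$ cannot be a multiple of $p$ unless it exceeds $3$, which is impossible since $\varepsilon_2<\varepsilon_3$ and $\varepsilon_3$ is at most $4$ or so; hence the order sequence is $(0,1,2,3)$, i.e. $\cF$ is classical with respect to $\Sigma$. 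Then, since the Frobenius order sequence is always a subsequence of the order sequence (by \cite[Proposition 2.1]{SV} and the analogue \cite[Proposition 3.1]{AB}), classicality of $\cF$ forces $(\nu_0,\nu_1,\nu_2)=(0,1,2)$ and $(\kappa_0,\kappa_1)=(0,1)$, which is exactly Frobenius classicality and $(q,q^m)$-Frobenius classicality. This handles case (b), and also completes case (a), where classicality with respect to $\Sigma$ is given and we only need to upgrade it to $(q,q^m)$-Frobenius classicality, again via the subsequence property together with $p>3$.

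Concretely I would organize it as follows. Step 1: record that $r=3$, $\varepsilon_0=0$, $\varepsilon_1=1$, and that the Frobenius order sequences are subsequences of $(\varepsilon_0,\varepsilon_1,\varepsilon_2,\varepsilon_3)$. Step 2: bound $\varepsilon_3$ by a Bézout argument for the conic pencil $\Sigma$ and deduce $\varepsilon_2\le 3$, $\varepsilon_3\le 4$ (or the sharp value — this is where a small computation is needed). Step 3: invoke the $p$-adic (Hasse-derivative) criterion for order sequences: since $1\in\{\varepsilon_i\}$, any $\varepsilon_i$ with $\binom{\varepsilon_i}{1}=\varepsilon_i\not\equiv 0\pmod p$ behaves classically up to that point; combined with $p>3$ and the bounds of Step 2, conclude $(\varepsilon_0,\ldots,\varepsilon_3)=(0,1,2,3)$. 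Step 4: under (b), $p>d\ge 4>3$, so Step 3 applies and gives classicality, hence Frobenius and $(q,q^m)$-Frobenius classicality; under (a), classicality with respect to $\Sigma$ is hypothesized, and $p>3$ again gives the $(q,q^m)$-Frobenius classicality via the subsequence property. Step 5: apply Theorem \ref{cotaSVH}.

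The main obstacle I anticipate is Step 2, pinning down the exact upper bound for $\varepsilon_3$ (and hence $\varepsilon_2$) for the linear system of conics through two prescribed points: one must argue that a general branch of $\cF$ is not "too flat" against the three-dimensional system of conics, i.e. rule out $\varepsilon_2=p$ or $\varepsilon_3=p$ when $p$ is small. Since $p>3$ and the only problematic small prime characteristics would be $p=2,3$, the bound $\varepsilon_3\le 4$ together with $p\ge 5$ kills all nonclassical possibilities cleanly; the delicate point is just verifying $\varepsilon_3\le 4$ rigorously from the geometry of conics meeting $\cF$, which is a finite Bézout/osculation computation rather than a conceptual difficulty.
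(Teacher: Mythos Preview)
Your Step~2 is where the argument breaks. The bound $\varepsilon_3\le 4$ is simply false for the linear system $\Sigma$ of conics through $P_1,P_2$: the generalized Artin--Mumford curve $\cM$ treated in Section~\ref{att} of this very paper has order sequence $(0,1,2,q)$ with respect to $\Sigma$ (Lemma~\ref{os}), so $\varepsilon_3=q$ can be arbitrarily large even when $p>3$. The only general B\'ezout bound available is the one you already wrote down, $\varepsilon_3\le n\le d$; there is no ``finite osculation computation'' that improves this to $4$. Consequently, under hypothesis (a) you cannot conclude that $\cF$ is classical with respect to $\Sigma$, and your route through order-sequence classicality collapses for case (a).

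For case (b) your idea is essentially right once you use the correct bound: $p>d\ge n\ge\varepsilon_3$ forces every $\varepsilon_i<p$, and then the $p$-adic criterion gives $(\varepsilon_0,\varepsilon_1,\varepsilon_2,\varepsilon_3)=(0,1,2,3)$; this is precisely \cite[Corollary~1.8]{SV}, which the paper invokes. Classicality then yields $\F_{q^m}$-Frobenius classicality (the paper cites \cite[Remark~8.52]{HKT}), reducing (b) to (a). For case (a) the paper does \emph{not} go through the order sequence at all: it applies \cite[Corollary~3.9]{AB}, which (for $p>3=r$) shows directly that $\F_{q^m}$-Frobenius classicality of $g_n^3$ implies $(q,q^m)$-Frobenius classicality. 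The relevant subsequence relation is between the $(q,q^m)$-Frobenius orders and the $\F_{q^m}$-Frobenius orders $(\nu_0,\nu_1,\nu_2)=(0,1,2)$ --- not between the $(q,q^m)$-Frobenius orders and the (possibly non-classical) order sequence $(\varepsilon_i)$. Your closing remark about ``the subsequence property'' gestures toward this, but as written it is routed through the wrong sequence.
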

\begin{proof}
Assume that $p>3$. If $\cF$ is $\F_{q^m}$-Frobenius classical with respect to $g_n^3$, then \cite[Corollary 3.9]{AB} implies that $\cF$ is $(q,q^m)$-Frobenius classical with respect to $g_n^3$, and the result follows. If $p>d$, it follows from \cite[Corollary 1.8]{SV} that $\cF$ is classical with respect to $g_n^3$. Thus \cite[Remark 8.52]{HKT} implies that $\cF$ is $\F_{q^m}$-Frobenius classical, and then we have the result from (a). 
\end{proof}

Let $g_d^2$ and $g_{2d}^5$ be the base-point-free linear series cut out on $\cF$ by the complete linear system of lines and conics, respectively. Then, if we assume that $\cF$ is $\F_{q^m}$-Frobenius classical with respect to these two linear series (this happens, in particular, if $p > 2d$, according to \cite[Corollary 1.8]{SV}), the St\"ohr-Voloch method (\ref{sv}) applied to  $g_d^2$ and $g_{2d}^5$ yields, respectively,

\begin{equation}\label{svf}
N_m(\cF) \leq r_1r_2+\frac{(r_1+r_2)q^m}{2}
\end{equation}  
and
\begin{equation}\label{svcl}
N_m(\cF) \leq 4r_1r_2+\frac{2(r_1+r_2)(q^m-5)}{5}.
\end{equation}  
Bound (\ref{svc}) is better than both bounds (\ref{svf}) and (\ref{svcl}) if
$$
q^m>\frac{6(r_1r_2-r_1-r_2)}{r_1+r_2}.
$$

\begin{exa}
Let $\cF$ defined over $\F_{13}$ by the affine equation $$X^6Y^6+X^6+Y^6-3=0.$$ It can be straightforwardly checked that $\cF$ satisfies (H). The genus of $\cF$ is $\gg=25$ and $N_1(\cF)=48$. According to Theorem \ref{cotaSVH}, Corollary \ref{corcota} and the above remarks, we have the following:
\begin{equation}\label{tabela1}
\begin{tabular}{|c|l|c|}
\hline
Bound & $N_2(\cF) \leq$ \\ \hline 
(\ref{svc})  & $736$ \\[1.5mm]
(\ref{abc})  &  $768$ \\[1.5mm]
Hasse-Weil (\ref{hw}) & $820$ \\[1.5mm]
(\ref{svcl}) & $931$  \\ [1.5mm] 
(\ref{svf}) & $1050$  \\ [1.5mm] \hline 
\end{tabular}
\end{equation}
\end{exa}

We point out that bound (\ref{abc}) improves bound (\ref{svc}) in many situations. For instance, if $m=2$, this happens in particular if 
\begin{equation}\label{N_1}
N_1(\cF) > \frac{q(q+3)(r_1+r_2)-6(r_1r_2-r_1-r_2)}{3(q+2)}.
\end{equation}
However, as will be seen in Section \ref{att}, bound (\ref{abc}) may improve bound (\ref{svc}) even if (\ref{N_1}) does not hold.

\section{Characterization of plane models satisfying (H) and a curve attaining bound (\ref{abc})}\label{att}

The aim of this section is twofold. First, we give a characterization, up to projective transformation, of the plane models of curves satisfying (H). Then, we finally present a curve for which equality holds in (\ref{abc}), namely the generalized Artin-Mumford curve. We point out that the results of Lemma \ref{pec}, Theorem \ref{carac} and Proposition \ref{toda} hold for curves over an arbitrary field $K$.

Set $\ell_1:X=0$, $\ell_2:Y=0$ and $\ell_\infty:Z=0$. Then, by \cite[Theorem 6.42]{HKT},
\begin{equation}\label{divpri}
\Div(x)=\ell_1 \cdot \cF-\ell_\infty \cdot \cF \ \ \ \ \textrm{ and } \ \ \ \ \Div(y)=\ell_2 \cdot \cF-\ell_\infty \cdot \cF.
\end{equation}

\begin{lemma}\label{pec}
Let $\cF:f(X,Y)=0$ be an irreducible plane curve of degree $d$ defined over a field $K$, and set $O_1=(1:0:0)$ and $O_2=(0:1:0)$. Let $x,y \in K(\cF)$ be such that $K(\cF)=K(x,y)$ and $f(x,y)=0$. If $x$ and $y$ have no common poles, then the following holds:
\begin{itemize}
\item [(i)] $\ell_\infty \cap \cF= \{O_1,O_2\}$.
\item [(ii)] $P \in \cX$ is a pole of $x$ if and only if the corresponding branch is centered at $O_1$, and $Q\in \cX$ is a pole of $y$ if and only if the corresponding branch is centered at $O_2$.  
\item [(iii)] $\ell_{\infty}$ is not tangent to $\cF$ neither at $O_1$ nor at $O_2$.
\end{itemize}
\end{lemma}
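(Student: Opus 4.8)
My plan is to derive everything from the divisor identities (\ref{divpri}) combined with the incidence pattern of the three lines $\ell_1\colon X=0$, $\ell_2\colon Y=0$, $\ell_\infty\colon Z=0$ at the points $O_1$ and $O_2$. For $P\in\cX$, write $\gamma_P$ for the corresponding branch of $\cF$ and $Q_P$ for its centre. Reading (\ref{divpri}) at $P$ gives
\[
v_P(x)=I(Q_P,\ell_1\cap\gamma_P)-I(Q_P,\ell_\infty\cap\gamma_P),\qquad
v_P(y)=I(Q_P,\ell_2\cap\gamma_P)-I(Q_P,\ell_\infty\cap\gamma_P),
\]
and I will use repeatedly that a line meets $\gamma_P$ at $Q_P$ with multiplicity $0$ if it misses $Q_P$, with multiplicity $\ord(\gamma_P)$ if it passes through $Q_P$ but is not the tangent of $\gamma_P$, and with multiplicity $>\ord(\gamma_P)$ if it is that tangent. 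Since $O_1\in\ell_2\cap\ell_\infty$ and $O_1\notin\ell_1$, while $O_2\in\ell_1\cap\ell_\infty$ and $O_2\notin\ell_2$, the formulas above immediately give that \emph{every} branch centred at $O_1$ is a pole of $x$, \emph{every} branch centred at $O_2$ is a pole of $y$, and a pole of $x$ or of $y$ can only occur at a branch centred on $\ell_\infty\cap\cF$. The whole proof then runs by contradiction: whenever one of (i)--(iii) fails I will exhibit a branch that is simultaneously a pole of $x$ and of $y$. (I first record that $x$ and $y$ are non-constant, since otherwise $\cF$ would be a line, and discard the vacuous case $d=1$.)

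For (i) I would argue as follows. By B\'ezout $\ell_\infty\cdot\cF$ is effective of degree $d\geq1$ (here $\cF\neq\ell_\infty$), so $\ell_\infty\cap\cF\neq\emptyset$. If some $Q\in\ell_\infty\cap\cF$ were different from $O_1$ and $O_2$, then $Q=(a:b:0)$ with $a,b\neq0$, so $Q$ lies on neither $\ell_1$ nor $\ell_2$, and the displayed formulas make every branch at $Q$ a common pole of $x$ and $y$ — impossible. Hence $\ell_\infty\cap\cF\subseteq\{O_1,O_2\}$. If, moreover, $O_1\notin\cF$, then every pole of $x$ lies at a branch centred at $O_2$; but all such branches are poles of $y$, so $x$ would have no pole, hence would be constant — a contradiction. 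Thus $O_1\in\cF$, and the symmetric argument (exchanging $x\leftrightarrow y$, $O_1\leftrightarrow O_2$, $\ell_1\leftrightarrow\ell_2$) gives $O_2\in\cF$; so $\ell_\infty\cap\cF=\{O_1,O_2\}$.

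For (ii): by (i) the poles of $x$ lie at branches centred at $O_1$ or $O_2$, and since a branch at $O_2$ is already a pole of $y$ it cannot be a pole of $x$; conversely the first displayed formula gives $v_P(x)=-I(O_1,\ell_\infty\cap\gamma_P)<0$ for every branch $\gamma_P$ at $O_1$. Exchanging $x$ and $y$ gives the statement for $y$ and $O_2$. For (iii), suppose $\ell_\infty$ is tangent to $\cF$ at $O_1$, i.e. $I(O_1,\cF\cap\ell_\infty)>m_{O_1}(\cF)$. By (\ref{muint}) and $\sum_\gamma\ord(\gamma)=m_{O_1}(\cF)$ (sum over branches $\gamma$ at $O_1$), some branch $\gamma_0$ at $O_1$ has $I(O_1,\ell_\infty\cap\gamma_0)>\ord(\gamma_0)$, so $\ell_\infty$ is the tangent line of $\gamma_0$. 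As $\ell_2\neq\ell_\infty$, the line $\ell_2$ is not the tangent of $\gamma_0$, so $I(O_1,\ell_2\cap\gamma_0)=\ord(\gamma_0)$ and the second displayed formula gives $v_{P_0}(y)=\ord(\gamma_0)-I(O_1,\ell_\infty\cap\gamma_0)<0$, where $P_0$ corresponds to $\gamma_0$. Thus $y$ has a pole at $P_0$; but by (ii) so does $x$ — a contradiction. The symmetric argument rules out tangency at $O_2$.

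The part needing most care is purely combinatorial: keeping straight which of $\ell_1,\ell_2,\ell_\infty$ pass through $O_1$ and through $O_2$, since this incidence pattern is precisely what makes the pole sets of $x$ and $y$ disjoint in the setting of the lemma and forces them to overlap the moment any of (i)--(iii) fails. The only other ingredient is the standard equivalence "$\ell_\infty$ is tangent to $\cF$ at $O_i$" $\Longleftrightarrow$ "$\ell_\infty$ is the tangent line of some branch centred at $O_i$", via (\ref{muint}). I do not foresee any substantive obstacle beyond these routine verifications and the harmless exclusion of $d=1$.
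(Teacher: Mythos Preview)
Your proof is correct and follows essentially the same route as the paper's: both derive (i) and (ii) directly from the divisor identities (\ref{divpri}) together with the incidence of $\ell_1,\ell_2,\ell_\infty$ at $O_1,O_2$, and both handle (iii) by producing a branch at $O_1$ whose tangent is $\ell_\infty$, comparing its intersection numbers with $\ell_2$ and $\ell_\infty$, and contradicting (ii). Your write-up is in fact more explicit than the paper's (which simply asserts that (i) and (ii) ``follow from (\ref{divpri})''), in particular in verifying that both $O_1$ and $O_2$ actually lie on $\cF$ and in flagging the degenerate case $d=1$.
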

\begin{proof}
(i) and (ii) follow from (\ref{divpri}). For (iii), assume that $\ell_{\infty}$ is tangent to $\cF$ at $O_1$. From (\ref{muint}) and the fact that the number of tangents to $\cF$ at $O_1$ is finite, it follows that there exists a branch $\gamma$ of $\cF$ centered at $O_1$ for which $\ell_{\infty}$ is tangent to. Then $I(O_1,\ell_2 \cap \gamma) < I(O_1,\ell_\infty \cap \gamma)$. Let $P \in \cX$ be the point associated to $\gamma$. Since $P$ has weight $I(O_1,\ell_\infty \cap \gamma)$ on $\ell_\infty \cdot \cF$ and weight $I(O_1,\ell_2 \cap \gamma)$ on $\ell_2 \cdot \cF$, it follows that $P$ is a pole of $y$, contradicting (ii). Hence the assertion follows. The same argument holds for $O_2$. 

\end{proof}

\begin{theorem}\label{carac}
Let $\cF:f(X,Y)=0$ be an irreducible plane curve of degree $d$ defined over a field $K$. Let $x,y \in K(\cF)$ be such that $K(\cF)=K(x,y)$ and $f(x,y)=0$. The following assertions are equivalent:
\begin{itemize}
\item [(a)] The functions $x$ and $y$ have no common poles, with $\deg(\Div(x)_\infty)=l$ and $\deg(\Div(y)_\infty)=m$.
\item [(b)] $f(X,Y)=X^mY^l+g(X,Y)$, where $g(X,Y)$ is a polynomial of degree $n<d=m+l$, such that the degree of $g(T,Y) \in K[Y][T]$ is $\leq m$ and degree of $g(X,T) \in K[X][T]$ is $\leq l$.
\item [(c)] $O_1=(1:0:0)$ and $O_2=(0:1:0)$ have respective multiplicities $l$ and $m$ on $\cF$ such that $m+l=d$.
\end{itemize}
\end{theorem}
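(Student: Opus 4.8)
The plan is to prove the cyclic chain of implications $(a)\Rightarrow(b)\Rightarrow(c)\Rightarrow(a)$, with Lemma~\ref{pec} doing the heavy lifting for the implications involving $(a)$. Throughout I write $F(X,Y,Z)$ for the homogenization of $f$ with respect to $Z$, so $\cF$ meets $\ell_\infty:Z=0$ in the zero set of $F(X,Y,0)$, which is the degree-$d$ form giving the leading part of $f$.

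For $(a)\Rightarrow(b)$: assuming $x,y$ have no common poles, Lemma~\ref{pec}(i) gives $\ell_\infty\cap\cF=\{O_1,O_2\}$, so the leading form $F(X,Y,0)$ factors as $cX^aY^b$ with $a+b=d$; after scaling we may take $c=1$. By Lemma~\ref{pec}(ii), $\deg(\Div(x)_\infty)$ is the total intersection multiplicity of $\cF$ with $\ell_\infty$ at $O_1$, and $(\ell_\infty\cdot\cF)$ has degree $d$ by B\'ezout, so $a=I(O_1,\ell_\infty\cap\cF)=l$ and similarly $b=m$; thus $f(X,Y)=X^mY^l+g(X,Y)$ with $\deg g<d$. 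To get the two mixed-degree constraints I use $\Div(x)$ from~(\ref{divpri}): the pole divisor of $x$ has degree $l$, equal to $\deg(\ell_\infty\cdot\cF)$ supported (by Lemma~\ref{pec}(ii) again) entirely at $O_1$, and likewise the zero divisor $(\ell_1\cdot\cF)$ has degree $d$; comparing with $I(O_2,\ell_1\cap\cF)$ forces the polynomial $g$ to have $Y$-degree at most... more cleanly: $g(T,Y)\in K[Y][T]$ has $T$-degree $\le m$ because otherwise a term $X^{m+1}Y^j$ (with $j\le l-1$ forced by $\deg g<d$) would contribute to the behaviour of $\cF$ at $O_2$ in a way incompatible with $O_2$ only appearing with the controlled multiplicity $m$; the symmetric statement for $g(X,T)$ follows by swapping the roles of $x,y$ and $O_1,O_2$. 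I expect to phrase this last point via the local expansions at $O_1,O_2$, i.e. dehomogenizing $F$ at $X$ and at $Y$ respectively and reading off that the affine curves through $(0,0)$ have the asserted multiplicities iff these degree bounds hold.

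For $(b)\Rightarrow(c)$: given $f(X,Y)=X^mY^l+g(X,Y)$ with the stated degree bounds and $\deg g<d$, I compute the multiplicity of $O_1=(1:0:0)$ by dehomogenizing $F(X,Y,Z)$ with respect to $X$, obtaining a polynomial in $Y,Z$ whose lowest-degree form at the origin I must identify; the constraint $\deg_T g(T,Y)\le m$ ensures no monomial of $g$ survives to lower the order below $l$, while the monomial $X^mY^l$ contributes order exactly $l$ in $(Y,Z)$ after dehomogenization, giving $m_{O_1}(\cF)=l$. Symmetrically $m_{O_2}(\cF)=m$, and $l+m=d$ is immediate.

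For $(c)\Rightarrow(a)$: if $O_1,O_2$ have multiplicities $l,m$ with $l+m=d$, then B\'ezout at $\ell_\infty$ gives $d=I(O_1,\ell_\infty\cap\cF)+I(O_2,\ell_\infty\cap\cF)+(\text{contributions elsewhere})\ge l+m=d$, forcing $\ell_\infty\cap\cF=\{O_1,O_2\}$ and $\ell_\infty$ tangent to $\cF$ at neither point (each intersection multiplicity equals the multiplicity of the point). By~(\ref{divpri}), $\Div(x)_\infty$ and $\Div(y)_\infty$ are both supported on the branches centered at points of $\ell_\infty\cap\cF=\{O_1,O_2\}$; to separate them I note that a branch centered at $O_1$ cannot be a pole of $x$... rather, $x$ has poles exactly along branches centered on $\ell_\infty$ that are \emph{not} on $\ell_1:X=0$, and since $O_2\in\ell_1$ while $O_1\notin\ell_1$, the poles of $x$ sit only at $O_1$; symmetrically the poles of $y$ sit only at $O_2$ (as $O_1\in\ell_2:Y=0$, $O_2\notin\ell_2$). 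Hence $x,y$ have no common poles, and the pole degrees are $\deg(\ell_\infty\cdot\cF)$ localized at $O_1$ resp.\ $O_2$, i.e.\ $l$ and $m$, recovering the degree data in~(a).

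The main obstacle will be the $(a)\Rightarrow(b)$ step, specifically establishing the two mixed-degree bounds on $g$ rather than just $\deg g<d$ and the identification of the leading form. The cleanest route is likely to work projectively: the condition "$O_2$ has multiplicity exactly $m$ (and $\ell_\infty$ is not tangent there, by Lemma~\ref{pec}(iii))" translates, after dehomogenizing $F$ at $Y$ and localizing at the point corresponding to $O_2$, into the vanishing of precisely those coefficients of $g$ that would otherwise produce a monomial $X^{m+1}Y^{l-1}, X^{m+2}Y^{l-2},\dots$; I should be careful that the non-tangency from Lemma~\ref{pec}(iii) is exactly what rules out the borderline case and pins the $T$-degree of $g(T,Y)$ at $\le m$ rather than $\le m+ (\text{something})$. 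Everything else is bookkeeping with~(\ref{divpri}), B\'ezout, and Lemma~\ref{pec}.
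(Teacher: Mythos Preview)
Your proposal is essentially correct but organized differently from the paper. The paper proves $(a)\Leftrightarrow(b)$ and then $(b)\Leftrightarrow(c)$; you run the cycle $(a)\Rightarrow(b)\Rightarrow(c)\Rightarrow(a)$, which saves you the implication $(b)\Rightarrow(a)$ at the cost of proving $(c)\Rightarrow(a)$ directly. Two substantive differences are worth noting. First, in $(a)\Rightarrow(b)$ you identify the exponents in the leading form immediately from $\deg(\Div(x)_\infty)=I(O_1,\ell_\infty\cap\cF)$ (valid, since $O_1\notin\ell_1$ kills the $\ell_1$-contribution in~(\ref{divpri})), whereas the paper leaves them as unknowns $\tilde m,\tilde l$, proves the degree bounds on $g$, and only then pins down $\tilde m=m$, $\tilde l=l$ via $\deg(\Div(x)_\infty)=[K(\cF):K(x)]=\deg_Y f$. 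Your route is slightly quicker here, though watch the bookkeeping: if the leading form is $X^aY^b$ then $I(O_1,\ell_\infty\cap\cF)=b$, not $a$, so your labels are swapped even though your conclusion $f=X^mY^l+g$ is right. Second, your direct $(c)\Rightarrow(a)$ is clean, but the sentence ``$x$ has poles exactly along branches on $\ell_\infty$ that are not on $\ell_1$'' is not literally true at singular points: a branch at $O_2$ lies on both $\ell_1$ and $\ell_\infty$, and what you actually need is $I(O_2,\ell_\infty\cap\gamma)\le I(O_2,\ell_1\cap\gamma)$, which follows from the non-tangency of $\ell_\infty$ you already established (this gives $I(O_2,\ell_\infty\cap\gamma)=\ord(\gamma)\le I(O_2,\ell_1\cap\gamma)$). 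For the mixed-degree bounds on $g$ in $(a)\Rightarrow(b)$, your instinct is exactly right: the paper writes $g=\sum_i a_i(X)Y^i$, dehomogenizes $F$ at $Y$, and shows that if $\deg_Y g>l$ then $\ell_\infty$ would appear among the tangents at $O_2$, contradicting Lemma~\ref{pec}(iii); your first heuristic (``incompatible with $O_2$ having multiplicity $m$'') is circular since that multiplicity is not yet known, but your fallback to Lemma~\ref{pec}(iii) is precisely the paper's argument.
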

\begin{proof}
Let us suppose that (a) holds. Write $f(X,Y)=F_d+g(X,Y)$, where $F_d$ denotes the form of degree $d$ of $f(X,Y)$. From Lemma \ref{pec}(i) we conclude that $F_d=\alpha X^{\tilde{m}}Y^{\tilde{l}}$ for some $\alpha \in K^*$, with ${\tilde{m}},{\tilde{l}} >0$. We may assume that $\alpha=1$, and so $\cF:X^{\tilde{m}}Y^{\tilde{l}}+g(X,Y)=0$, where $g(X,Y) \in K[X,Y]$ has degree $n<d$. We will proceed to prove that the variable $Y$ has degree $ \leq {\tilde{l}}$ on $g(X,Y)$, with the proof for the variable $X$ being analogous.  

Let $a_i(x) \in K[x]$, with $i \in \{0,\ldots,k\}$ and $a_k(x) \neq 0$, be such that
$$
g(X,Y)=a_0(X)+a_1(X)Y+\cdots+a_k(X)Y^k.
$$
Set $d_i:=\deg(a_i(X))$. If $F(X,Y,Z)$ denotes the homogenization of $F(X,Y)$ with respect to the variable $Z$, then $$F(X,Y,Z)=X^{\tilde{m}}Y^{\tilde{l}}+G(X,Y,Z)Z^{{\tilde{m}}+{\tilde{l}}-n},$$ where
$$
G(X,Y,Z)=A_0(X,Z)Z^{n-d_0}+A_1(X,Z)YZ^{n-d_1-1}+\cdots+A_k(X,Z)Y^kZ^{n-d_k-k},
$$
with $A_i(X,Z)$ denoting the homogenization of $a_i(X)$ with respect to $Z$. Thus  
\begin{equation}\label{deso}
F(X,1,Z)=X^{\tilde{m}}+A_0(X,Z)Z^{{\tilde{m}}+{\tilde{l}}-d_0}+A_1(X,Z)Z^{{\tilde{m}}+{\tilde{l}}-d_1-1}+\cdots+A_k(X,Z)Z^{{\tilde{m}}+{\tilde{l}}-d_k-k}.
\end{equation}
Note that $d_k+k \leq n <{\tilde{m}}+{\tilde{l}}$. From (\ref{deso}) we have $m_{O_2}(\cF)=\min\{{\tilde{m}},{\tilde{m}}+{\tilde{l}}-k\}$. Assume that $k>{\tilde{l}}$. Then the tangents to $\cF$ at $O_2$ are defined by the linear factors of $A_k(X,Z)Z^{{\tilde{m}}+{\tilde{l}}-d_k-k}$. In particular, $\ell_\infty$ is one of these tangents, contradicting Lemma \ref{pec}(iii). Therefore ${\tilde{l}} \geq k$. It remains to show that ${\tilde{m}}=m$ and ${\tilde{l}}=l$. But this follows from 
\begin{equation}\label{polediv}
\deg(\Div(x)_\infty) =[K(\cF):K(x)] \ \ \ \text{and} \ \ \ \deg(\Div(y)_\infty)= [K(\cF):K(y)], 
\end{equation}
where $[K(\cF):L]$ denotes the degree of the field extension $K(\cF)/L$ (see e.g. \cite[Theorem 1.4.11]{St}).

Conversely, assume that (b) holds. It is easy to see that $\ell_\infty \cap \cF= \{O_1,O_2\}$. With $F(X,Y,Z)$ denoting the homogenization of $f(X,Y)$ w.r.t. Z, write $F(X,1,Z)$ as in (\ref{deso}). Thus $I(O_2,\ell_\infty \cap \gamma)\leq I(O_2,\ell_1 \cap \gamma)$ for all branches $\gamma$ of $\cF$ centered at $O_2$. In the same way, one can see that $I(O_1,\ell_\infty \cap \xi)\leq I(O_1,\ell_2 \cap \xi)$ for all branches $\xi$ of $\cF$ centered at $O_1$. Hence it follows from (\ref{divpri}) that $x$ and $y$ have no common poles. By (\ref{polediv}),  equivalence between (a) and (b) is established. 

Now, (b) implies (c) straightforwardly. Assume that (c) holds. Since the multiplicity of $O_2$ in $\cF$ is $m$, we have
\begin{equation}\label{homo}
F(X,Y,Z)=F_m(X,Z)Y^l+F_{m+1}(X,Z)Y^{l-1}+\cdots+F_d(X,Z),
\end{equation}
where $F_i(X,Z)$ is homogeneous of degree $i$. Thus
\begin{equation}\label{efe}
f(X,Y)=F_m(X,1)Y^l+F_{m+1}(X,1)Y^{l-1}+\cdots+F_d(X,1).
\end{equation}
In particular, we see that $Y$ has degree $l$ on $f(X,Y)$; analogously, it can be shown that $X$ has degree $m$ on $f(X,Y)$. Hence, $X$ has degree $\leq m$ on each term of the sum on the right side of (\ref{efe}). Therefore, (b) holds.
\end{proof}

The next result shows that hypothesis (H) is not so restrictive. 

\begin{prop}\label{toda}
Let $\cF$ be a plane curve defined over a field $K$ with at least two branches defined over $K$. Then $\cF$ has a plane model satisfying (H). 
\end{prop}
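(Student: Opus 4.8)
The plan is to exploit the equivalence (a)$\Leftrightarrow$(c) in Theorem \ref{carac}: if I can realize $\cF$ inside $\PG(2,K)$ so that the points $O_1=(1:0:0)$ and $O_2=(0:1:0)$ are the only points of $\cF$ on $\ell_\infty$ and $m_{O_1}(\cF)+m_{O_2}(\cF)=\deg\cF$, then (H) holds with $P_1=O_1$, $P_2=O_2$ (their multiplicities automatically being defined over $K$). By Theorem \ref{carac} this is in turn equivalent to finding generators $x,y$ of $K(\cF)/K$ with no common poles, and with $K(\cF)=K(x,y)$. So the whole problem reduces to producing such a pair of functions.

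First I would use the hypothesis: let $P,Q$ be two distinct branches (points of $\cX$) rational over $K$. The idea is to pick $x$ to have all its poles "at $P$'s side" and $y$ to have all its poles "at $Q$'s side", concretely by prescribing pole divisors supported away from each other. By Riemann--Roch, for a sufficiently large integer $a$ the divisor $aP$ is non-special and $\ell(aP)\ge 2$, so there is a non-constant $x\in K(\cF)$ with $(x)_\infty$ supported only at $P$; similarly choose a non-constant $y$ with $(y)_\infty$ supported only at $Q$. Since $P\neq Q$, the functions $x$ and $y$ have no common poles. The remaining issue is that we also need $K(\cF)=K(x,y)$, i.e. the pair must generate the whole function field, not merely a proper subfield.

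To secure the generation property I would argue as follows. The subfield $K(x,y)\subseteq K(\cF)$ defines a plane curve $\cF'$ dominated by $\cX$; we want the map $\cX\to\cF'$ to be birational. This can be arranged by choosing $x$ and $y$ with enough independent "room": e.g. first fix any $x$ as above with $(x)_\infty=aP$ for $a$ large, so that $K(x)$ has index $[K(\cF):K(x)]=a$; then I must choose $y$, with poles only at $Q$, so that $K(x,y)=K(\cF)$. Equivalently, $y$ must generate $K(\cF)$ over $K(x)$. The function field $K(\cF)/K(x)$ is a finite separable-or-not extension; in any case, enlarging the allowed pole order at $Q$, the space of candidate $y$'s grows, and for all but finitely many choices in a large enough Riemann--Roch space the element $y$ is a primitive generator of $K(\cF)/K(x)$ (a primitive element exists after possibly replacing $x$ by $x+cy$ for suitable $c\in K$ when $K$ is infinite; when $K$ is finite one uses that $K(\cF)/K$ has a separating transcendence basis and then a standard finite-field primitive-element argument, or simply passes through a birational plane model and clears the chosen pole conditions). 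Thus such a pair $(x,y)$ exists, and then Theorem \ref{carac}(a)$\Rightarrow$(c) delivers a plane model of $\cF$ on which $O_1,O_2$ have multiplicities summing to the degree, i.e. (H) holds.

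The main obstacle I expect is exactly the generation step: prescribing the pole divisors of $x$ and $y$ to be disjoint is easy via Riemann--Roch, but one must simultaneously guarantee $K(x,y)=K(\cF)$, and over a finite field the naive "generic linear combination" trick for a primitive element is not immediately available. I would handle this by taking the pole orders $a=\deg(x)_\infty$ and $b=\deg(y)_\infty$ large and coprime (so that $[K(\cF):K(x)]$ and $[K(\cF):K(y)]$ are forced into a configuration in which $K(x,y)$ has index dividing $\gcd(a,b)=1$ in... — more carefully, $[K(\cF):K(x,y)]$ divides both $[K(\cF):K(x)]$ and $[K(\cF):K(y)]$ only after ensuring these equal $a$ and $b$, which holds once $P$, resp. $Q$, is the unique pole with full multiplicity), thereby squeezing $K(x,y)=K(\cF)$ for free. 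Making the coprimality argument precise — in particular arranging $(x)_\infty=aP$ exactly (not a proper subdivisor) — is the delicate point, and is where I would spend the most care.
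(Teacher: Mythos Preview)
Your approach is correct but takes a genuinely different route from the paper's. You work on the function-field side: pick $x$ with $(x)_\infty=aP$ and $y$ with $(y)_\infty=bQ$ for non-gaps $a,b\geq\max(2,2\gg)$ with $\gcd(a,b)=1$; then $[K(\cF):K(x,y)]$ divides both $a$ and $b$, hence equals $1$, and Theorem~\ref{carac}(a)$\Rightarrow$(c) produces the desired model (with multiplicities $a,b\geq 2$, so both points are genuine singularities). The step you flag as ``delicate'' --- getting $(x)_\infty=aP$ exactly --- is in fact routine: for $a\geq 2\gg$ Riemann--Roch gives $\ell(aP)-\ell((a-1)P)=1$, so such an $x$ exists, and it is $K$-rational because $P$ is. Your earlier detour through primitive-element arguments is unnecessary once you commit to the coprimality trick.

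The paper instead argues geometrically: move the centers of the two branches to $(1:0:0)$ and $(0:1:0)$ by a projective change of coordinates, ensure $(0:0:1)\notin\cF$, and apply the standard Cremona quadratic transformation $(X:Y:Z)\mapsto(YZ:XZ:XY)$. Classical formulas (Fulton) show the transformed curve has degree $2d-r_1-r_2$ and multiplicities $d-r_1,\,d-r_2$ at the two fundamental points, which sum to the new degree; Theorem~\ref{carac} then closes the argument. This is shorter and more explicit, but tacitly assumes the two branch centers are distinct (so they can be moved to different fundamental points) and that the resulting multiplicities $d-r_i$ are at least $2$ --- both harmless in the intended range but not literally covered for, say, smooth conics. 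Your Riemann--Roch argument is indifferent to whether the two branches sit over the same plane point and lets you choose the multiplicities freely, at the cost of being less constructive.
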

\begin{proof}
Denote by $d$ the degree of $\cF$ and consider the $K$-rational branches $\gamma_1$ and $\gamma_2$ of $\cF$ centered at $P_1$ and $P_2$, respectively.  Denote by $r_1$ and $r_2$ the multiplicities of $P_1$ and $P_2$, respectively. After a $K$-projective transformation, we may assume that $P_1=(1:0:0), P_2=(0:1:0)$, and $(0:0:1) \notin \cF$. Then, it follows from \cite[Section 7.4 (2) and (3)]{Fu} that the Cremona standard quadratic transformation $(X:Y:Z)\mapsto (YZ:XZ:XY)$ gives rise to a plane model of degree $2d-r_1-r_2$ for which $(1:0:0)$ and $(0:1:0)$ have multiplicities $d-r_1$ and $d-r_2$. Therefore, the result follows from Theorem \ref{carac}.
\end{proof}

\subsection{The generalized Artin-Mumford curve}

Henceforth, the characteristic of the field $\F_q$ is assumed to be $p>3$. The generalized Artin-Mumford curve $\cM$ is the projective closure of the curve defined by the affine equation
\begin{equation}\label{gam}
\cM:(X^q-X)(Y^q-Y)=1
\end{equation}
over the ground field $\F_q$. It is straightforward to check that $\cM$ has only two singularities, namely $O_1=(1:0:0)$ and $O_2=(0:1:0)$, both $q$-fold ordinary. In particular, $\cM$ satisfies (H), and from (\ref{eq4}) it follows that it has genus $\gg=(q-1)^2$.

Let $\F_q(x,y)$ be the function field of $\cM$, where $(x^q-x)(y^q-y)=1$. Let us consider the linear system $\Sigma$ of conics passing through $O_1$ and $O_2$. Denote by $g_{n}^3$ the linear series arising from $\Sigma$. Note that the conic $\cC \in \Sigma$, where $\cC$ is defined by $Z^2=0$, does not share tangents with $\cM$ neither at $O_1$ nor at $O_2$. Thus  from Proposition (\ref{eq2}), we have that $n=2q$. Furthermore, $\{1,x,y,xy\}$ is a basis for the Riemann-Roch space associated to $g_{2q}^3$.

Recall that $N_r(\cM)$ denotes the number of $\mathbb{F}_{q^r}$-rational points on a non-singular model of $\cM$. The bound (\ref{ab}) with $m=2$ and $u=1$ obtained via $\Sigma$ reads
\begin{equation}\label{bam}
c_1N_1(\cM)+c_2(N_2(\cM)-N_1(\cM)) \leq 2q\big((\kappa_0+\kappa_1)(q-2)+q^2+q+2 \big), 
\end{equation}
where $(\kappa_0,\kappa_1)$ denotes the $(q,q^2)$-Frobenius order sequence of $\cM$ with respect to $\Sigma$. If $(\kappa_0,\kappa_1)=(0,1)$, then (\ref{bam}) coincides with (\ref{abc}). In what follows, we show that equality holds in (\ref{bam}). As before, $\cX$ stands for a fixed non-singular model of $\cM$. We start by counting the rational points on $\cX$.

\begin{prop}\label{nop}
$N_1(\cM)=2q$ and $N_2(\cM)=q^2(q-1)+2q$.
\end{prop}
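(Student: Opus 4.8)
The plan is to count separately the rational branches centered at the two singular points $O_1,O_2$ and those centered at affine points (or at non-singular points at infinity, of which there are none since $\ell_\infty\cap\cM=\{O_1,O_2\}$), working over $\F_q$ first and then over $\F_{q^2}$. For the affine points: a point $(a,b)\in\F_{q^r}^2$ lies on $\cM$ iff $(a^q-a)(b^q-b)=1$; in particular $a^q-a\neq 0$, so $a\notin\F_q$, and then $b$ is uniquely determined by $b^q-b=(a^q-a)^{-1}$, provided the trace-type condition for solvability holds. For $r=1$ this forces $a\in\F_q$, a contradiction, so there are no affine $\F_q$-rational points; for $r=2$, $a$ ranges over $\F_{q^2}\setminus\F_q$ ($q^2-q$ choices), and for each such $a$ the Artin–Schreier equation $b^q-b=(a^q-a)^{-1}$ has a solution $b\in\F_{q^2}$ iff $\mathrm{Tr}_{\F_{q^2}/\F_q}\big((a^q-a)^{-1}\big)=0$; I would check that this trace always vanishes because $a^q-a$ is anti-fixed by the $q$-power Frobenius (it satisfies $(a^q-a)^q=a^{q^2}-a^q=a-a^q=-(a^q-a)$, using $a\in\F_{q^2}$), hence so is its inverse, hence its $\F_{q^2}/\F_q$-trace is zero. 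Each such $a$ then gives exactly $q$ values of $b$, for a total of $q(q^2-q)=q^2(q-1)$ affine $\F_{q^2}$-rational points.

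For the branches at infinity: since $O_1,O_2$ are ordinary $q$-fold points, each has $q$ distinct branches (with $q$ distinct tangent lines). I would show each of these $2q$ branches is $\F_q$-rational by exhibiting a local description: near $O_1=(1:0:0)$ dehomogenize via $u=Y/X=y/x$, $w=Z/X=1/x$, so $\cM$ becomes $(1-w^{q-1})(u^q-uw^{q-1})=w^{2q}$, i.e. $u^q-uw^{q-1}=w^{2q}/(1-w^{q-1})$; the $q$ branches correspond to the $q$ roots of $u^q-u\cdot 0=0$ modulo higher order, but more cleanly the tangent lines at $O_1$ are $Y=\zeta X$ ... actually the tangent cone is $u^q - u w^{q-1}\equiv u^q \pmod{\text{deg}\geq q+1}$ after scaling — I would instead argue that each branch is the restriction of one of the $q$ points at infinity on the rational curve $x^q-x=c$ as $c$ varies, each defined over $\F_q$ since the tangent directions $Y/X$ at $O_1$ correspond to the $q$ points where a fixed factor $x^q-x-c$ vanishes at infinity, all rational. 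The cleanest route is: the map $\cX\to\mathbf P^1$ given by $t=x^q-x$ has, over $t=\infty$, exactly the $q$ branches of $\cM$ at $O_1$, and since $\F_q(x^q-x)\subseteq \F_q(x)$ with $\F_q(x)/\F_q(x^q-x)$ totally split at appropriate places, each branch is $\F_q$-rational; symmetrically for $O_2$ via $s=y^q-y$. Hence all $2q$ branches at infinity are $\F_q$-rational (and a fortiori $\F_{q^2}$-rational).

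Combining: $N_1(\cM)=0+2q=2q$ and $N_2(\cM)=q^2(q-1)+2q$, as claimed.

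The main obstacle I anticipate is the rigorous local analysis at $O_1$ and $O_2$: one must verify that each $q$-fold ordinary point genuinely splits into $q$ branches each of which is individually $\F_q$-rational (ordinariness gives $q$ distinct tangent lines over $\overline{\F}_q$, but one must confirm these tangent directions, hence the branches, are $\F_q$-rational — this comes down to the $q$ tangent lines at $O_1$ being $Y=\lambda X$ for $\lambda$ running over a Frobenius-stable set, which I'd pin down by factoring the degree-$q$ part of $F(X,Y,Z)$ at $O_1$ and seeing it is $\prod_{\lambda\in\F_q}(Y-\lambda X)$ up to scalars and lower-order terms, or equivalently via the fibre of $x^q-x$ over $\infty$). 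The affine count is comparatively routine once the trace vanishing $\mathrm{Tr}_{\F_{q^2}/\F_q}((a^q-a)^{-1})=0$ is observed.
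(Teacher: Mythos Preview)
Your approach is essentially identical to the paper's: both observe there are no affine $\F_q$-points, use the fact that for $a\in\F_{q^2}$ one has $(a^q-a)^q=-(a^q-a)$ (hence the inverse also has vanishing $\F_{q^2}/\F_q$-trace, so the Artin--Schreier equation for $b$ is always solvable) to count $q(q^2-q)=q^2(q-1)$ affine $\F_{q^2}$-points, and pick up the $2q$ branches at infinity from the $\F_q$-rational tangent directions at the two ordinary $q$-fold points. One correction to your local computation: the tangent cone at $O_1=(1{:}0{:}0)$ is $Y^q-YZ^{q-1}=\prod_{a\in\F_q}(Y-aZ)$, so the tangent lines are $Y=aZ$ (equivalently $y=a$), not $Y=\lambda X$ --- once this is written correctly the $\F_q$-rationality of all $2q$ branches is immediate, exactly as the paper records.
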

\begin{proof}
Clearly, $(X^q-X)(Y^q-Y)=1$ has no $\F_q$-rational solutions. The tangent lines to $\cM$ at $O_1$ are defined by $y=a$, with $a \in \F_q$, and the tangent lines to $\cM$ at $O_2$ are defined by $x=b$ with $b \in \F_q$. Hence $N_1(\cM)=2q$.

In order to compute $N_2(\cM)$, we only have to count the number of $\F_{q^2}$-rational solutions of $(X^q-X)(Y^q-Y)=1$, since those are all non-singular. To this end, consider the map
$$
\begin{array}{cccc}
\psi:&\F_{q^2} & \longrightarrow & \F_{q^2}\\
     &\alpha   & \longmapsto     & \alpha^q-\alpha.
\end{array}
$$

We claim that $\delta^{-1} \in \Im(\psi)$ for all nonzero $\delta \in \Im(\psi)$. Indeed, since $\Im(\psi)$ is the kernel of the trace map of $\F_{q^2}$ onto $\F_q$, we have $\delta \in \Im(\psi) \Leftrightarrow \delta^q+\delta=0$. Thus
$$
0=\frac{\delta^q+\delta}{\delta^{q+1}}=\delta^{-1}+\delta^{-q},
$$
whence the claim holds. 
For $a \in \F_{q^2}^{*}$, let $b \in \F_{q^2}^{*}$ such that $\psi(b)=\psi(a)^{-1}$. Then $\psi(a)\psi(b)=1$, which means that $(a,b)$ is an affine $\F_{q^2}$-rational point of $\cM$. Taking into account the fibers of $\psi(a)$ and $\psi(a)^{-1}$ via $\psi$, we obtain $q^2$ distinct points associated to $\psi(a)$. Thus from $\#(\Im(\psi)^{*})=q-1$, we have $q^2(q-1)$ $\F_{q^2}$-rational points obtained in this way. Since every affine $\F_{q^2}$-rational point $P \in \cM$ arises from the procedure described above, we conclude that the number of $\F_{q^2}$-rational points of $\cM$ that are not $\fq$-rational is $q^2(q-1)$. Therefore, the result follows by adding the $2q$ points that are $\F_{q}$-rational.
\end{proof}

The aim of the following sequence of results is to compute $\kappa_0$ and $\kappa_1$, and to estimate $c_1$ and $c_2$ in bound (\ref{bam}). Note that if $abc \neq 0$ the point $P=(a:b:c) \in \cM$ is non-singular; so it can be identified with an unique point of $\cX$.

\begin{lemma}\label{oscon}
Let $P=(a:b:1) \in \cM$. The osculating conic to $\cM$ at $P$ is the projective closure $\cH_P$ of the irreducible conic defined by $g_P(X,Y)=0$, where
\begin{equation}\label{eqosc}
g_P(X,Y)=(ab-1)^q-a^qY-b^qX+XY.
\end{equation} 
In particular, $\cH_P \in \Sigma$ for all $P=(a:b:1) \in \cM$. Furthermore, $I(P,\cM \cap \cH_P)\geq q$ for all such $P$.
\end{lemma}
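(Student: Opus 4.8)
The plan is to prove the three assertions of the lemma in sequence — that $\cH_P\in\Sigma$, that it is irreducible, and that $I(P,\cM\cap\cH_P)\ge q$ — and then to deduce that $\cH_P$ must be the osculating conic. The first two are immediate: homogenizing, $\cH_P$ has equation $XY-b^qXZ-a^qYZ+(ab-1)^qZ^2=0$, which carries no $X^2$ or $Y^2$ term, hence passes through $O_1=(1:0:0)$ and $O_2=(0:1:0)$, so $\cH_P\in\Sigma$; using the defining relation $(a^q-a)(b^q-b)=1$ of $\cM$ together with $(ab-1)^q=a^qb^q-1$ (valid since $p>2$) one checks at once that $g_P(a,b)=(a^q-a)(b^q-b)-1=0$, so $P\in\cH_P$; and the determinant of the symmetric matrix of the above quadratic form equals $\tfrac14\bigl(a^qb^q-(ab-1)^q\bigr)=\tfrac14\neq 0$, so $\cH_P$ is nondegenerate, hence an irreducible conic.

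The heart of the matter is the intersection multiplicity, which I would compute by localizing at $P$. Since $(a^q-a)(b^q-b)=1$, neither $a$ nor $b$ lies in $\F_q$; in particular $a,b\neq 0$, so $P=(a:b:1)$ is a nonsingular point of $\cM$ with a single branch $\gamma_P$, and by \eqref{muint} and \cite[Definition 4.34]{HKT} we have $I(P,\cM\cap\cH_P)=v_P\bigl(g_P(x,y)\bigr)$. Put $u=x-a$, $v=y-b$, $A=a^q-a$, $B=b^q-b$, so that $AB=1$, both $A,B$ are nonzero constants, and $v_P(u),v_P(v)\ge 1$. A direct substitution, using $(a^q-a)(b^q-b)=1$ to kill the constant term, gives $g_P(x,y)=uv-Av-Bu$. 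On the other hand, writing the defining relation of $\cM$ as $\bigl(A+(u^q-u)\bigr)\bigl(B+(v^q-v)\bigr)=AB$ and cancelling $AB$ yields $A(v^q-v)+B(u^q-u)+(u^q-u)(v^q-v)=0$; substituting $Av+Bu=uv-g_P(x,y)$ into this identity and expanding $(u^q-u)(v^q-v)$, the two order-$2$ terms $uv$ cancel and one is left with $g_P(x,y)=u^qv+uv^q-u^qv^q-Av^q-Bu^q$. Since $A,B\neq 0$ and $v_P(u),v_P(v)\ge 1$, every one of these five summands has $v_P\ge q$, so $v_P\bigl(g_P(x,y)\bigr)\ge q$, i.e. $I(P,\cM\cap\cH_P)\ge q$.

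Finally I would argue that $\cH_P$ is the osculating conic, i.e. the unique curve of $\Sigma$ meeting $\gamma_P$ with maximal multiplicity. The curves of $\Sigma$ correspond to the functions in the $4$-dimensional space $W=\langle 1,x,y,xy\rangle$ (the Riemann--Roch space of $g_{2q}^3$); since $\deg\cM=2q>2$ these four functions are $\bfq$-linearly independent, so the set $\{v_P(w):0\neq w\in W\}=\{I(P,\cC\cap\gamma_P):\cC\in\Sigma\}$ of intersection multiplicities of $\gamma_P$ with curves of $\Sigma$ consists of exactly four values $j_0<j_1<j_2<j_3$. Now $1\in W$ gives $v_P=0$ and $x-a\in W$ gives $v_P=1$; moreover implicit differentiation of $(x^q-x)(y^q-y)=1$ gives $dy/dx=-(y^q-y)^2$, which at $P$ is finite and nonzero, so $v_P(x-a)=v_P(y-b)=1$, whence $A(y-b)+B(x-a)=(x-a)(y-b)-g_P(x,y)\in W$ has $v_P=2$ (using $v_P(g_P(x,y))\ge q>2$). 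Thus $(j_0,j_1,j_2)=(0,1,2)$, and since $g_P\in W$ realizes a multiplicity $\ge q>2$ we must have $j_3=v_P(g_P(x,y))\ge q$. Consequently the subspace $\{w\in W:v_P(w)\ge j_3\}$ is one-dimensional and spanned by $g_P$, so $\cH_P$ is the unique curve of $\Sigma$ attaining the top multiplicity $j_3$ with $\gamma_P$; that is, $\cH_P$ is the osculating conic to $\cM$ at $P$.

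The one genuinely delicate step is the reorganization of $g_P(x,y)$ in the middle paragraph: one must reduce it via the two algebraic identities and verify that the order-$2$ contributions cancel exactly, leaving only terms of $v_P\ge q$ — this is precisely where the special shape of $(X^q-X)(Y^q-Y)=1$ is exploited. Everything else is bookkeeping; the side facts used ($a,b\notin\F_q$, $\deg\cM>2$, and that $x-a$, $y-b$ are local parameters at every affine point of $\cM$) are immediate, and since $p>3$ there are no small-characteristic or small-$q$ exceptions to track.
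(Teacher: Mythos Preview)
Your proof is correct. The chief difference from the paper's argument is in how you identify $\cH_P$ as the osculating conic. The paper first obtains $I(P,\cM\cap\cH_P)\ge q$ via the compact identity
\[
g_P(x,y)=g_P(x,y)-f(x,y)=(ab-xy)^q-(b-y)^qx-(a-x)^qy
\]
in $\bfq(\cM)$, a one-line reduction equivalent to your longer substitution in $u,v$. It then argues indirectly: it lets $\cH_P'$ denote the true osculating conic, invokes \cite[Theorem~1]{GV1} to get $I(P,\cM\cap\cH_P')\ge q$, applies \cite[Lemma~3.3]{AB2} to deduce $I(P,\cH_P\cap\cH_P')\ge q>4$, and concludes $\cH_P=\cH_P'$ by B\'ezout and the irreducibility of $\cH_P$. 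You instead compute the lower orders $(j_0,j_1,j_2)=(0,1,2)$ at $P$ directly from explicit elements of $W=\langle 1,x,y,xy\rangle$, so that $g_P$ is visibly the unique (up to scalar) element of $W$ realizing the top order $j_3$. Your route is more elementary and entirely self-contained---no appeal to \cite{GV1} or \cite{AB2}---and as a bonus it already shows that every affine point has order sequence beginning $(0,1,2,\dots)$, anticipating part of Lemma~\ref{os}; the paper's route is shorter on the page but imports more machinery. One cosmetic remark: the identity $(ab-1)^q=a^qb^q-1$ is just Frobenius and needs no hypothesis on $p$; the place where $p>3$ actually enters your argument is in ensuring $q>2$ (so that $j_3>j_2$) and $1/4\neq 0$ (for the nondegeneracy of the conic).
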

\begin{proof}
Set $f(x,y)=(x^q-x)(y^q-y)-1$. Equation $f(x,y)=0$ can be written as
\begin{equation}\label{auxam}
(xy-1)^q-x^qy-y^qx+xy=0.
\end{equation}
Denote by $\cC_P$ the projective closure of the curve defined by $g_P(X,Y)=0$. From (\ref{auxam}) we obtain
$$
g_P(x,y)=g_P(x,y)-f(x,y)=(ab-xy)^q-(b-y)^qx-(a-x)^qy \in \bfq(\cM).
$$
Hence $v_P(g_P(x,y)) \geq q$, i.e., $I(P,\cM \cap \cC_P)\geq q$. It is easy to check that $\cC_P$ is non-singular; in particular, it is irreducible. Let $\cH_P$ be the osculating conic to $\cM$ at $P$. Then \cite[Theorem 1]{GV1} implies that $I(P,\cM \cap \cH_P) \geq q$. Thus by \cite[Lemma 3.3]{AB2} we have $I(P,\cC_P \cap \cH_P) \geq q$. As we are assuming that $p>3$,
$$
I(P,\cC_P \cap \cH_P) \geq q>4=\deg(\cC_P)\cdot\deg(\cH_P).
$$
Therefore, by B\'ezout's Theorem, the curves $\cC_P$ and $\cH_P$ have a common component. However, since $\cC_P$ is irreducible, it follows that $\cC_P=\cH_P$.

\end{proof}

\begin{lemma}\label{os}
The curve $\cM$ is non-classical w.r.t. $\Sigma$ with order sequence $(0,1,2,q)$.
\end{lemma}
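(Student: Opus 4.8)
The plan is to determine the generic order sequence $(\varepsilon_0,\varepsilon_1,\varepsilon_2,\varepsilon_3)$ of $\cM$ with respect to $\Sigma$ one entry at a time. Since $g_{2q}^3$ is base-point-free, $\varepsilon_0=0$. For $\varepsilon_1$, note that the morphism attached to the basis $\{1,x,y,xy\}$ is $P\mapsto(1:x(P):y(P):x(P)y(P))$, which is birational onto its image, so $g_{2q}^3$ is simple; equivalently $x\colon\cX\to\mathbf{P}^1$ has degree $[\F_q(\cM):\F_q(x)]=q$ and is separable, because in $y^q-y-\tfrac{1}{x^q-x}=0$ the partial derivative with respect to $y$ equals $-1$. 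Hence $v_P(x-a)=1$ for a general point $P=(a:b:1)$, and $\varepsilon_1=1$. So it remains to compute $\varepsilon_3$ and $\varepsilon_2$.

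For $\varepsilon_3$ I would use Lemma \ref{oscon}: for every $P=(a:b:1)\in\cM$ the osculating conic is $\cC_P:g_P(X,Y)=0$, so $j_3(\gamma_P)=I(P,\cM\cap\cC_P)=v_P(g_P(x,y))$ and $\varepsilon_3$ is the generic value of this integer. Put $s=x-a$, $t=y-b$, $A=a^q-a$, $B=b^q-b$, so that $AB=1$ and $A,B\neq0$. Using $g_P(x,y)=(ab-xy)^q-(b-y)^qx-(a-x)^qy$ (from the proof of Lemma \ref{oscon}) and additivity of the $q$-th power one gets
\[
g_P(x,y)=-A\,t^q-B\,s^q+s\,t^q+s^q t-s^q t^q .
\]
The equation of $\cM$ reads $A(t^q-t)+B(s^q-s)+(s^q-s)(t^q-t)=0$; since $A\neq0$ its tangent at $P$ is $Bs+At=0$, so $s$ is a local parameter and $t=\alpha s+\beta s^2+\cdots$ with $\alpha=-B/A\neq0$ and $\beta=\alpha/A\neq0$, the latter two relations being exactly the coefficients of $s^1$ and $s^2$ in that equation (legitimate since $q\geq5$). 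Because $t^q=\alpha^q s^q+O(s^{2q})$, the displayed formula gives $g_P(x,y)=(-A\alpha^q-B)\,s^q+O(s^{q+1})$, and $-A\alpha^q-B=0$ is equivalent to $A^{2(q-1)}=1$. As $x^q-x$ is a non-constant function on $\cM$, this happens only for finitely many branches, so $v_P(g_P(x,y))=q$ generically and $\varepsilon_3=q$.

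For $\varepsilon_2$ we already know $1=\varepsilon_1<\varepsilon_2<\varepsilon_3=q$, so $\varepsilon_2\geq2$; the reverse inequality follows by exhibiting a section of valuation exactly $2$ at a general point. The function $f_P:=(y-b)-\alpha(x-a)=t-\alpha s$ lies in the span of $\{1,x,y,xy\}$, hence corresponds to a member of $\Sigma$, and by the expansion above $f_P=\beta s^2+\cdots$ with $\beta\neq0$, so $v_P(f_P)=2$. Thus $2$ belongs to the order sequence $\{0,1,\varepsilon_2,q\}$ of $\gamma_P$, and since $2\notin\{0,1,q\}$ (here $q>3$) this forces $\varepsilon_2=2$. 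Hence $(\varepsilon_0,\varepsilon_1,\varepsilon_2,\varepsilon_3)=(0,1,2,q)\neq(0,1,2,3)$, so $\cM$ is non-classical with respect to $\Sigma$ with the asserted order sequence.

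The one genuinely delicate point is the valuation computation behind $\varepsilon_3=q$: one must ensure that $s$ is really a local parameter (this is precisely what $A\neq0$ provides), carry the $q$-th powers correctly through the Frobenius-additive expansion of $t$, and, most importantly, verify that the coefficient $-A\alpha^q-B$ of $s^q$ is not identically zero, which comes down to the observation that $A^{2(q-1)}=1$ singles out only finitely many branches. Everything else is bookkeeping with the explicit model of $\cM$.
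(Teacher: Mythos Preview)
Your proof is correct. The local computations are accurate: with $s=x-a$, $t=y-b$, $A=a^q-a$, $B=b^q-b$ and $AB=1$, the equation of $\cM$ does give $\alpha=-B/A$ and $\beta=\alpha/A\neq0$, and the coefficient of $s^q$ in $g_P(x,y)$ is $-A\alpha^q-B=B\big((B/A)^{q-1}-1\big)$, which vanishes only for the finitely many branches with $A^{2(q-1)}=1$. Exhibiting $f_P=(y-b)-\alpha(x-a)$ then pins down $\varepsilon_2=2$.

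Your route differs genuinely from the paper's. The paper never expands anything locally: for $\varepsilon_2$ it argues by contradiction, invoking the $p$-adic criterion (if $\varepsilon_2>2$ then $\varepsilon_2\geq p$) together with a lemma from \cite{AB2} to force the osculating conic and a second conic through $P$ to share a component, contradicting irreducibility of $\cH_P$; for $\varepsilon_3$ it gets $\varepsilon_3\geq q$ from Lemma~\ref{oscon} and then appeals to \cite[Theorem~7.65]{HKT} applied to the relation \eqref{auxam} for the reverse inequality. Your argument trades these citations for explicit bookkeeping with the local parameter $s$, which makes the proof self-contained and shows, as a bonus, exactly at which points $j_3(P)>q$ can occur. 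The paper's argument, on the other hand, generalises more readily, since the B\'ezout/irreducibility trick for $\varepsilon_2$ does not require knowing the second Taylor coefficient of $t$. One cosmetic point: the birationality remark in your first paragraph is not what you actually use for $\varepsilon_1=1$; the separability of $x$ already gives $v_P(x-a)=1$ at a general $P$, so you may drop that clause.
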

\begin{proof}
Let $(\varepsilon_0,\varepsilon_1,\varepsilon_2,\varepsilon_3)$ be the order sequence of $\cM$ w.r.t. $\Sigma$. Here $\varepsilon_0=0$, $\varepsilon_1=1$ and $\varepsilon_2 \geq 2$. In view of Lemma \ref{oscon}, $\varepsilon_3 \geq q$. Assume that $\varepsilon_2 >2$. By \cite[Corollary 1.9]{SV}, it follows that $\varepsilon_2 \geq p$. Let $P \in \cX$ be a $\Sigma$-ordinary point, that is, $j_i(P)=\varepsilon_i$ for all $i=0,1,2,3$. Choose a conic $\tilde{\cC}_P$ for which $I(P,\cM \cap \tilde{\cC}_P)=\varepsilon_2$ and let $\cH_P$ be the osculating conic to $\cM$ at $P$. Note that $\cH_P \neq \tilde{\cC}_P$. Since $\varepsilon_2 \geq p$, from  \cite[Lemma 3.3]{AB2} we have
$$
I(P,\tilde{\cC}_P \cap \cH_P) \geq p \geq 5> \deg(\tilde{\cC}_P) \cdot \deg(\cH_P).
$$
Again by B\'ezout's Theorem, the curves $\tilde{\cC}_P$ and $\cH_P$ have a common component. But from Lemma \ref{oscon}, the conic $\cH_P$ is irreducible. Thus $\cH_P=\tilde{\cC}_P$, a contradiction. Hence $\varepsilon_2=2$. In view of (\ref{auxam}) and \cite[Theorem 7.65]{HKT}, we conclude that $\varepsilon_3=q$.
\end{proof}

Let $\cM(\F_q)=\{P_a, Q_b \ | \ a,b \in \F_q\}$ be the set of $\F_q$-rational points of $\cX$. Here, each $P_a$ (resp. $Q_b$) corresponds to a branch of $\cM$ centered at $O_2$ (resp. $O_1$) for which the line $x=a$ (resp. $y=b$) is tangent to. Regarding $\F_q(x,y)$ as an elementary $p$-abelian extension of degree $q$ of $\F_q(x)$, we obtain that $\div(x-a)_0=qP_a$ for all $a \in \F_q$; see \cite[Proposition 3.7.10]{St}. 
Lemma \ref{pec}(ii) gives us $\div(x-a)_\infty=\sum_{b \in \F_q}Q_b$. By the symmetry of the polynomial $f(X,Y)=(X^q-X)(Y^q-Y)-1$, we obtain for all $a,b \in \F_q$ that
\begin{equation}\label{divisor}
\div(x-a)=qP_a-\displaystyle\sum_{b \in \F_q}Q_b \ \ \ \text{ and } \ \ \  \div(y-b)=qQ_b-\displaystyle\sum_{a \in \F_q}P_a.
\end{equation}

\begin{lemma}\label{pos}
Let $\gamma$ be an $\fq$-rational branch of $\cM$. The order sequence of $\gamma$ with respect to $\Sigma$ is $(0,1,q,q+1)$.
\end{lemma}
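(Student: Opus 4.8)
The plan is to read the order sequence of $\gamma$ directly off the basis $\{1,x,y,xy\}$ of the Riemann--Roch space associated with $g_{2q}^{3}$, using only the principal divisors recorded in (\ref{divisor}). Recall from Section~\ref{bg} (cf. \cite[Section~1]{SV}) that, if $P\in\cX$ is the point attached to $\gamma$, the order sequence of $\gamma$ with respect to $\Sigma$ is the set of the four distinct values taken by $v_{P}(\lambda_{0}+\lambda_{1}x+\lambda_{2}y+\lambda_{3}xy)$ as $(\lambda_{0},\lambda_{1},\lambda_{2},\lambda_{3})$ runs over the nonzero vectors, normalised so that the smallest value is $0$. By the symmetry $x\leftrightarrow y$ of the defining polynomial, it suffices to treat a branch centered at $O_{2}$, so that $P=P_{a}$ for some $a\in\fq$.

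First I would record the local data. Put $u:=x-a$. From (\ref{divisor}) one gets $v_{P_{a}}(u)=q$ and $v_{P_{a}}(y)=-1$ (so $y$ has a simple pole at $P_{a}$), both uniformly in $a\in\fq$, so no separate treatment of $a=0$ is needed. After the invertible linear substitution $\mu_{0}=\lambda_{0}+\lambda_{1}a$, $\mu_{1}=\lambda_{1}$, $\mu_{2}=\lambda_{2}+\lambda_{3}a$, $\mu_{3}=\lambda_{3}$, a section of $g_{2q}^{3}$ takes the form $s=\mu_{0}+\mu_{1}u+\mu_{2}y+\mu_{3}uy$, with $(\mu_{0},\mu_{1},\mu_{2},\mu_{3})$ again running over all nonzero vectors.

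The crux is that the four monomials $1,u,y,uy$ have pairwise distinct valuations at $P_{a}$, namely $0,\ q,\ -1,\ q-1$ (these are distinct since $q\geq 5$ as $p>3$). By the strict triangle inequality for $v_{P_{a}}$, every nonzero $s$ then satisfies $v_{P_{a}}(s)=\min\{\,v_{P_{a}}(\text{monomial of }s\text{ with nonzero coefficient})\,\}$, so $v_{P_{a}}(s)\in\{-1,0,q-1,q\}$; and each of these values occurs (take $\mu_{2}\neq 0$; then $\mu_{2}=0,\ \mu_{0}\neq 0$; then $\mu_{2}=\mu_{0}=0,\ \mu_{3}\neq 0$; then $\mu_{2}=\mu_{0}=\mu_{3}=0,\ \mu_{1}\neq 0$). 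Hence the set of attained valuations is exactly $\{-1,0,q-1,q\}$, and subtracting the minimum $-1$ gives the order sequence $(0,1,q,q+1)$.

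I expect the only genuinely delicate point to be the justification used in the first paragraph: that the order sequence of a branch centered at $O_{2}$ --- which is a base point of $\Sigma$, although $g_{2q}^{3}$ itself is base-point-free --- is computed from the sections $\langle 1,x,y,xy\rangle$ after the normalisation by the least valuation. This is exactly the definition of the order sequence of $P$ relative to the morphism attached to $g_{2q}^{3}$ (cf. \cite[Section~1]{SV}): each conic of $\Sigma$ meets $\gamma$ at $O_{2}$ with a fixed multiplicity coming from the base locus of the series that $\Sigma$ cuts out on $\cM$ before base points are removed, and this fixed multiplicity is precisely what the normalisation discards. Beyond this, the proof is the short valuation computation above.
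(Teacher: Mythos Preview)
Your proof is correct and follows essentially the same route as the paper's: both reduce by symmetry to $P=P_{a}$, read off from (\ref{divisor}) that $v_{P_{a}}(x-a)=q$ and $v_{P_{a}}(y)=-1$, and then exhibit four basis elements with pairwise distinct valuations. The only cosmetic difference is that the paper multiplies through by $y^{-1}$ to obtain the basis $\{1,\,y^{-1},\,x-a,\,(x-a)y^{-1}\}$ with valuations $0,1,q,q+1$ directly, whereas you keep the basis $\{1,\,y,\,x-a,\,(x-a)y\}$ and subtract the minimum valuation $-1$ at the end.
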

\begin{proof}
Let $P \in \cX$ be the point corresponding to $\gamma$. Without loss of generality, we may assume that $P=P_a$ for some $a \in \F_q$. Then (\ref{divisor}) implies $v_P(y^{-1})=1$ and $v_P(x-a)=q$. Furthermore, we have $v_P((x-a)y^{-1})=q+1$. Since $\{1,y^{-1},x-a,(x-a)y^{-1}\}$ is another basis of the Riemann-Roch space associated to $g_{2q}^3$, the proof is complete.
\end{proof}

\begin{prop}\label{fnc}
Let $r$ be a positive integer. The curve $\cM$ is $\F_{q^r}$-Frobenius non-classical w.r.t. $\Sigma$ if and only if $r=2$.
\end{prop}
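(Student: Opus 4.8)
The plan is to determine the $\F_{q^r}$-Frobenius order sequence of $\cM$ with respect to $\Sigma$ for each $r$, using the basis $\{1,x,y,xy\}$ and a separating variable. First I would recall from Lemma \ref{os} that the order sequence of $\cM$ is $(0,1,2,q)$, so $\cM$ is already non-classical; by \cite[Corollary 2.6]{SV} (the analogue of the inclusion $\{\nu_i\} \subseteq \{\varepsilon_i\}$) the $\F_{q^r}$-Frobenius orders $(\nu_0,\nu_1,\nu_2)$ form a subsequence of $\{0,1,2,q\}$ with $\nu_0=0$, and actually $(\nu_0,\nu_1)=(0,1)$ unless $\nu_1$ jumps — the only alternative being that the sequence involves $q$. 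The key point is that $\cF$ is $\F_{q^r}$-Frobenius non-classical precisely when $q$ appears among the $\nu_i$, i.e. when $(\nu_0,\nu_1,\nu_2) = (0,1,q)$ or $(0,2,q)$ or $(1,2,q)$ etc.; and by the general theory the last Frobenius order is forced once we know how the $q$-power Frobenius acts on the osculating flag.

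The concrete computation: pick a $\Sigma$-ordinary point $P=(a:b:1)\in\cM$ with $abc\neq 0$ and use $\tau = x-a$ (or $\tau=x$) as local/separating variable. I would write down the $4\times 4$ matrix in \eqref{fncl} with rows $(f_i^{q^r})$ and $D_\tau^{(\nu_j)}f_i$ for $f_i \in \{1,x,y,xy\}$, and compare with the analogous Wronskian-type determinant. Using the defining relation in the form \eqref{auxam}, namely $(xy-1)^q = x^q y + y^q x - xy$, one sees that $(xy)^q$, $x^q$, $y^q$ are $\F_q$-linear combinations (with coefficients that are themselves $q$-th powers of functions) of $1,x,y,xy$ — more precisely the osculating conic at $P$, by Lemma \ref{oscon}, has equation $(ab-1)^q - a^q Y - b^q X + XY = 0$, which exhibits the Frobenius image of the osculating point in terms of the flag. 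This is exactly what makes $q$ an $\F_{q}$-Frobenius order when $r$ is such that the Frobenius $\Phi_{q^r}$ carries the point into its own osculating conic but not into the osculating line/tangent. For $r=2$: since $\cH_P \in \Sigma$ and $I(P,\cM\cap\cH_P)\geq q$ with $\cH_P$ defined over... one checks $\cH_{\Phi_q(P)}$ relates to $\cH_P$ so that $\Phi_{q^2}(P)$ lies on $\cH_P$; combined with $N_2(\cM)$ being large (Proposition \ref{nop}), the bound \eqref{sv} applied with $(\nu_0,\nu_1,\nu_2)=(0,1,2)$ would be violated, forcing $\nu_2 = q$ and hence $\F_{q^2}$-Frobenius non-classicality. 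For $r=1$: one shows directly (e.g. from the determinant, or because $N_1(\cM)=2q$ is small) that $(\nu_0,\nu_1,\nu_2)=(0,1,2)$ works, so $\cM$ is $\F_q$-Frobenius classical.

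For the "only if" direction with $r\geq 3$ (and $r=1$), I would argue that $q$ cannot be a Frobenius order: if $\nu_i = q$ for some $i$, then by \cite[Proposition 2.1]{SV} and the theory of Frobenius orders, $q^r \equiv$ (something) forcing a congruence that holds only for $r\leq 2$; concretely, the relation $(xy-1)^q = x^qy+y^qx-xy$ lets one express $\Phi_q$ on the flag, and iterating, $\Phi_{q^r}(P) \in \cH_P$ forces $r = 2$ because applying Frobenius once more to the conic equation $g_P$ does not return a conic through $P$ unless the coefficients $(ab-1)^q, a^q, b^q$ satisfy a $q^r$-Frobenius-invariance that fails for $r\neq 2$. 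Equivalently: the osculating conic $\cH_P$ is defined over $\F_{q^2}(a,b)$ but genuinely not over $\F_q(a,b)$ for a general point, so $\Phi_{q^2}$ — and no other $\Phi_{q^r}$ with $r$ odd or $r\geq 3$ — stabilizes the relevant incidence. I expect the main obstacle to be making this last congruence argument clean: ruling out that $q$ could be a Frobenius order for some $r \geq 3$ requires either a careful determinant computation showing \eqref{fncl} is nonzero already at $(\nu_0,\nu_1,\nu_2)=(0,1,2)$ for those $r$, or an appeal to the precise structure of Frobenius orders (they are determined by which $\varepsilon_i$ satisfy $j$-th power conditions), and one must handle the generic vs. special points correctly — the identity \eqref{divisor} and Lemma \ref{pos} show the behaviour at the $\F_q$-rational branches differs, so those points must be excluded when selecting a $\Sigma$-ordinary point.
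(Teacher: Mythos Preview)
Your overall strategy --- reduce the question to whether $\Phi_{q^r}(P)$ lies on the osculating conic $\cH_P$ for a generic point $P$ --- is exactly the paper's approach, and your use of Lemma \ref{os} and Lemma \ref{oscon} is correct. However, there are two genuine gaps in the execution.

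First, your proposed argument for $r=2$ via a violation of the St\"ohr-Voloch bound does not work for all $q$ with $p>3$. If $\cM$ were $\F_{q^2}$-Frobenius classical, bound \eqref{sv} with $n=2q$, $r=3$, $\gg=(q-1)^2$ and $(\nu_0,\nu_1,\nu_2)=(0,1,2)$ gives $N_2(\cM)\le (2q^3+6q^2-6q)/3$, while Proposition \ref{nop} yields $N_2(\cM)=q^3-q^2+2q$. The inequality $q^3-q^2+2q>(2q^3+6q^2-6q)/3$ is equivalent to $q^2-9q+12>0$, which fails for $q=5$ and $q=7$. So the counting argument alone cannot establish non-classicality for those primes.

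Second, for $r\neq 2$ your sketch (``the osculating conic is defined over $\F_{q^2}(a,b)$, so only $\Phi_{q^2}$ stabilizes the relevant incidence'') conflates two different conditions: what matters is whether $\Phi_{q^r}(P)\in\cH_P$, not whether $\cH_P$ is $\Phi_{q^r}$-stable. You never produce a concrete obstruction, and you flag this yourself as the main difficulty.

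The paper closes both gaps with a single algebraic identity you did not spot: substituting $(x^{q^r},y^{q^r})$ into $g_P$ from Lemma \ref{oscon} and using \eqref{auxam} gives
\[
g_P(x^{q^r},y^{q^r})=\bigl((x^{q^{r-1}}-x)(y^{q^{r-1}}-y)-1\bigr)^q
\]
as an element of $\bfq(\cM)$. Hence $\Phi_{q^r}(P)\in\cH_P$ generically if and only if $(X^q-X)(Y^q-Y)-1$ divides $(X^{q^{r-1}}-X)(Y^{q^{r-1}}-Y)-1$; since the latter is absolutely irreducible for every exponent, this forces $r=2$. This one-line computation replaces both your counting argument and your vague congruence argument, and works uniformly for all $q$.
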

\begin{proof}
First, note that $x$ is a separating variable of $\fq(\cM)$. From (\ref{fncl}), the curve $\cM$ is $\F_{q^r}$-Frobenius non-classical if and only if
\begin{equation}\label{wrons}
\left|
\begin{array}{cccc}
1 & x^{q^r} &y^{q^r} & x^{q^r}y^{q^r} \\
1 & x       &y       & xy \\
0 & 1       & D_x^{(1)}(y) & D_x^{(1)}(xy)\\
0 & 1       & D_x^{(2)}(y) & D_x^{(2)}(xy) 
\end{array} \right|=0.
\end{equation} 
 Since the order sequence of $\cM$ is $(0,1,2,q)$ (Lemma \ref{os}), we conclude from (\ref{wrons})  and \cite[Corollary 1.3]{SV} that $\cM$ is $\F_{q^r}$-Frobenius non-classical w.r.t. $\Sigma$ if and only if $\Phi_{q^r}(P) \in \cH_P$ for infinitely many points $P \in \cX$, where $\cH_P$ is the osculating conic to $\cM$ at $P$ and $\Phi_{q^r}:\cX \rightarrow \cX$ is the $\F_{q^r}$-Frobenius map. Thus by Lemma \ref{oscon}, the last assertion is equivalent to the function
\begin{equation}\nonumber 
(xy-1)^q-x^qy^{q^r}-y^qx^{q^r}+x^{q^r}y^{q^r}=\big((x^{q^{r-1}}-x)(y^{q^{r-1}}-y)-1 \big)^q
\end{equation}
being vanishing. Therefore, $\cM$ is $\F_{q^r}$-Frobenius non-classical with respect to $\Sigma$ if and only if
\begin{equation}\label{eqdiv}
(X^q-X)(Y^q-Y)-1 \textrm{ divides } (X^{q^{r-1}}-X)(Y^{q^{r-1}}-Y)-1.
\end{equation}
Since the polynomial $(X^{p^m}-X)(Y^{p^m}-Y)-1$ is absolutely irreducible for all $m \geq 0$, assertion (\ref{eqdiv}) is only possible if $r=2$. Hence, the result follows.
\end{proof}

\begin{cor}\label{dfam}
The curve $\cM$ is $(q^u,q^m)$-Frobenius classical w.r.t. $\Sigma$ for all integers $u,m>0$ such that $u \neq m$ and $\gcd(u,m)=1$.
\end{cor}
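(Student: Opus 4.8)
I would compute the $(q^u,q^m)$-Frobenius order sequence $(\kappa_0,\kappa_1)$ of $\cM$ with respect to $\Sigma$ directly from the determinantal criterion~(\ref{dfncl}). Since $(\kappa_0,\kappa_1)$ is, by definition, the lexicographically smallest pair $0\le\kappa_0<\kappa_1$ making the determinant in~(\ref{dfncl}) nonzero, and $(0,1)$ is the smallest admissible pair, it is enough to show that this determinant is a nonzero element of $\bfq(\cM)$ when it is formed from the basis $\{1,x,y,xy\}$ of the Riemann--Roch space of $g_{2q}^3$, the separating variable $\tau=x$ (so $D_x^{(0)}$ is the identity and $D_x^{(1)}$ is ordinary differentiation), and $(\kappa_0,\kappa_1)=(0,1)$; by \cite[Proposition~3.2]{AB} this determinant is independent of the basis, and we may assume $u<m$. (If \cite[Corollary~3.9]{AB}, already used in Corollary~\ref{corcota}, were available in the form ``for $p>3$ a base-point-free $g_n^3$ is $(q^u,q^m)$-Frobenius classical whenever it is $\F_{q^u}$- or $\F_{q^m}$-Frobenius classical'', the statement would follow at once from Proposition~\ref{fnc}, since $u\neq m$ forces at most one of $u,m$ to equal $2$; I give the self-contained computation below.)

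Write $\alpha=x^{q^u}-x$, $\alpha'=x^{q^m}-x$, $\beta=y^{q^u}-y$, $\beta'=y^{q^m}-y$ and $y'=D_x^{(1)}y$; differentiating the defining equation $(x^q-x)(y^q-y)=1$ and using $p\mid q$ gives $y'=-(y^q-y)^2$. In the determinant
\[
\Delta_0=\left|\begin{array}{cccc} 1 & x^{q^m} & y^{q^m} & x^{q^m}y^{q^m}\\ 1 & x^{q^u} & y^{q^u} & x^{q^u}y^{q^u}\\ 1 & x & y & xy\\ 0 & 1 & y' & y+xy' \end{array}\right|
\]
I would apply the column operation $C_4\mapsto C_4-xC_3$ and the row operations $R_1\mapsto R_1-R_3$, $R_2\mapsto R_2-R_3$, and expand along the first column; evaluating the resulting $3\times3$ determinant (using $y^{q^m}-y^{q^u}=\beta'-\beta$) gives
\[
\Delta_0=\beta\beta'(\alpha-\alpha')-y'\,\alpha\alpha'(\beta-\beta').
\]

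Next I would set $v:=x^q-x$. On $\cM$ one has $y^q-y=1/v$, so $y'=-1/v^{2}$, and telescoping through the Frobenius yields $\alpha=\sum_{i=0}^{u-1}v^{q^i}$, $\alpha'=\sum_{i=0}^{m-1}v^{q^i}$, $\beta=\sum_{i=0}^{u-1}v^{-q^i}$, $\beta'=\sum_{i=0}^{m-1}v^{-q^i}$. Hence $\Delta_0\in\bfq(v)$, and since $v$ is non-constant (otherwise $x$ would be algebraic over $\bfq$) it is transcendental over $\bfq$; therefore $\Delta_0\neq0$ in $\bfq(\cM)$ if and only if $\Delta_0$ is not the zero Laurent polynomial in the indeterminate $v$. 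Substituting and using $\alpha-\alpha'=-\sum_{i=u}^{m-1}v^{q^i}$, $\beta-\beta'=-\sum_{i=u}^{m-1}v^{-q^i}$, I would write $-\Delta_0=T_1+T_2$ with
\[
T_1=\beta\beta'\sum_{k=u}^{m-1}v^{q^k},\qquad T_2=\frac{\alpha\alpha'}{v^{2}}\sum_{k=u}^{m-1}v^{-q^k}.
\]

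To conclude, compare the top-degree terms in $v$. The exponents appearing in $T_1$ are $q^k-q^i-q^j$ with $u\le k\le m-1$, $0\le i\le u-1$, $0\le j\le m-1$; the largest of these is $q^{m-1}-2$, reached only at $(k,i,j)=(m-1,0,0)$, with coefficient $1$. The exponents appearing in $T_2$ are $q^i+q^j-2-q^k$ over the same ranges, and each is at most $q^{u-1}+q^{m-1}-2-q^{u}<q^{m-1}-2$, the inequality holding because $u\ge1$ gives $q^{u-1}<q^{u}$. So the coefficient of $v^{q^{m-1}-2}$ in $T_1+T_2$ equals $1\neq0$ in characteristic $p$, hence $\Delta_0\neq0$; therefore $(\kappa_0,\kappa_1)=(0,1)$ and $\cM$ is $(q^u,q^m)$-Frobenius classical with respect to $\Sigma$. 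The main work is the combination of collapsing $\Delta_0$ to the two-term expression above and then the degree comparison; the decisive point is the elementary inequality $q^{u-1}<q^{u}$, which keeps the leading term of $T_1$ from being cancelled by $T_2$ and is the structural reason $\cM$ stays $(q^u,q^m)$-Frobenius classical for all $u<m$.
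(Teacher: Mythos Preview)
Your computation is correct. After the row and column reductions, the determinant indeed collapses to $\Delta_0=\beta\beta'(\alpha-\alpha')-y'\alpha\alpha'(\beta-\beta')$, and writing everything as Laurent polynomials in $v=x^q-x$ together with the top-degree comparison shows $\Delta_0\neq0$; since $(0,1)$ is the lexicographically smallest admissible pair, this forces $(\kappa_0,\kappa_1)=(0,1)$.

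The paper, however, does not compute at all. Its argument is the one you sketched parenthetically: if $\cM$ were $(q^u,q^m)$-Frobenius non-classical, then $N_1(\cM)>0$ forces $\kappa_0=0$ by \cite[Proposition~3.4]{AB}, and then \cite[Corollary~3.9]{AB} makes $\cM$ simultaneously $\F_{q^u}$- and $\F_{q^m}$-Frobenius non-classical with respect to $\Sigma$, contradicting Proposition~\ref{fnc} (which pins down $r=2$ as the only non-classical level). So the paper's proof is a three-line reduction to Proposition~\ref{fnc} via the general machinery of \cite{AB}. Your approach is longer but more elementary and self-contained: it bypasses both Proposition~\ref{fnc} and the cited results from \cite{AB}, it never uses the hypothesis $\gcd(u,m)=1$, and it exhibits $\Delta_0$ explicitly as a Laurent polynomial in the single transcendental $v$ with an identifiable leading term. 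The trade-off is that the paper's route explains \emph{why} classicality holds (double non-classicality would force two distinct single non-classical levels, which Proposition~\ref{fnc} rules out), whereas yours verifies it by an explicit nonvanishing.
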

\begin{proof}
Suppose that $\cM$ is $(q^u,q^m)$-Frobenius non-classical w.r.t. $\Sigma$ for positive integers $u,m$, with $u \neq m$ and $\gcd(u,m)=1$. Let $(\kappa_0,\kappa_1)$ be the $(q^u,q^m)$-Frobenius order sequence of $\cM$. Since $N_1(\cM)>0$, it follows from \cite[Proposition 3.4]{AB} that $\kappa_0=0$. Then \cite[Corollary 3.9]{AB} implies that $\cM$ is $\F_{q^r}$-Frobenius non-classical w.r.t. $\Sigma$ for $r=u$ and $r=m$, contradicting Proposition \ref{fnc}.
\end{proof}

We are now able to show that equality holds in (\ref{bam}). Let $\gamma$ be an $\fq$-rational branch of $\cM$ with order sequence $(j_0,j_1,j_2,j_3)$ w.r.t. $\Sigma$. According to \cite[Proposition 4.1]{AB},
$$
c_1 \geq qj_1+j_2-\kappa_0+j_3-\kappa_1.
$$
From Lemma \ref{pos}, we have $(j_0,j_1,j_2,j_3)=(0,1,q,q+1)$. Furthermore, $(\kappa_0,\kappa_1)=(0,1)$ by Corollary \ref{dfam}. Thus $c_1 \geq 3q$. From \cite[Proposition 4.1]{AB}, $c_2 \geq 2$. Therefore, the conclusion follows.

\subsection*{Acknowledgments}
This research was supported by FAPESP-Brazil, grant 2013/00564-1. The author would like to thank Herivelto Borges and G\'abor Korchm\'aros for many useful discussions on the topic of this article.

\vspace{0,5cm}\noindent {\em Author's address}:

\vspace{0.2 cm} \noindent Nazar ARAKELIAN \\
Centro de Matem\'atica, Computa\c c\~ao e Cogni\c c\~ao
\\ Universidade Federal do ABC \\ Avenida dos Estados, 5001 \\
CEP 09210-580, Santo Andr\'e SP
(Brazil).\\
 E--mail: {\tt n.arakelian@ufabc.edu.br}


\begin{thebibliography}{9}
\addcontentsline{toc}{section}{Bibliography}

\bibitem{AB}N. Arakelian, H. Borges, Bounds for the number of points on curves over finite fields,  arXiv:1608.05039  (2016).

\bibitem{AB2}N. Arakelian, H. Borges, Frobenius non-classicality of Fermat curves with respect to cubics, Israel J. Math., to appear.

\bibitem{Bo2}H. Borges, On multi-Frobenius non-classical plane curves, Arch. Math. (Basel) 93(2009), no. 6, 541-553.

\bibitem{CV}M.L. Carlin, J.F. Voloch, Plane curves with many points over finite fields, Rocky Mountain J. Math., 34(2004) 1255-1259.

\bibitem{Fu}W. Fulton, Algebraic curves, W. A. Benjamin INC, New York (1969).

\bibitem{GSX}A. Garcia, H. Stichtenoth, C-P. Xing, On subfields of the Hermitian function field, Compositio Math., 120(2000) 137-170.

\bibitem{GV1}A. Garcia, J.F. Voloch, Wronskians and linear independence in fields of prime characteristic,  Manuscripta Math. 59(1987) 457-469.

\bibitem{HKT}J.W.P. Hirschfeld, G. Korchm\'aros, F. Torres, Algebraic curves over a finite field, Princeton Series in Applied Mathematics (2008).

\bibitem{HV}A. Hefez, J.F. Voloch, Frobenius non-classical curves, Arch. Math. 54 (1990) 263-273.

\bibitem{St}H. Stichtenoth, Algebraic function fields and codes, Springer, Berlin (1993).

\bibitem{SV}K.O. St\"ohr, J.F. Voloch, Weierstrass points and curves over finite fields, Proc. London Math. Soc. 52(1986) 1-19.

\end{thebibliography}
\end{document}